\newcommand{\B}{\mathcal{B}}
\newcommand{\M}{\mathcal{M}}
\newcommand{\N}{\mathbb{N}}
\newcommand{\Z}{\mathbb{Z}}
\newcommand{\R}{\mathbb{R}}
\newcommand{\T}{\mathbb{T}}
\newtheorem{theorem}{Theorem}
\numberwithin{theorem}{section} % important bit
\newtheorem{lemma}[theorem]{Lemma}
\newtheorem{corollary}[theorem]{Corollary}
\theoremstyle{definition}
\newtheorem{definition}[theorem]{Definition}
\newtheorem{example}[theorem]{Example}
\newtheorem{remark}[theorem]{Remark}
\numberwithin{equation}{section}
\newcommand\reallywidehat[1]{%
\savestack{\tmpbox}{\stretchto{%
  \scaleto{%
    \scalerel*[\widthof{\ensuremath{#1}}]{\kern.1pt\mathchar"0362\kern.1pt}%
    {\rule{0ex}{\textheight}}%WIDTH-LIMITED CIRCUMFLEX
  }{\textheight}% 
}{2.4ex}}%
\stackon[-6.9pt]{#1}{\tmpbox}%
}
\begin{document}
\title{A Marcinkiewicz Multiplier Theory for Schur Multipliers}

\author {Chian Yeong  Chuah, Zhen-Chuan  Liu, Tao Mei}

\date{}
\maketitle
\begin{abstract}
 We prove a Marcinkiewicz type multiplier theory for the boundedness of Schur multipliers on the Schatten $p$-classes. This generalizes a previous result of J. Bourgain for Toeplitz type Schur multipliers and complements a recent result by J. Conde et al. (\cite{CGPT22a}). As a corollary, we obtain a  new unconditional decomposition for the Schatten $p$-classes, $1<p<\infty$. We extend our main result to the $\Z^d$ and $\R^d$ cases, and include an operator-valued  version of it using Pisier's non-commutative $L^\infty(\ell_1)$-norm.
\end{abstract}

\thanks{ {\it 2010 MSC} Primary:  46B28, 46L52. Secondary: 42A45.}

\section{Introduction}

 Let $A\in  B(H)$ be a bounded operator on a (separable) Hilbert space $H$. We can write $A$ in its matrix representation
   $$A=(a_{k,j})_{k,j\in\Z}$$ with $a_{k,j}=\langle Ae_k,e_j\rangle $ for a given orthonormal basis $\{e_k\}_{k\in \Z}$ of $H$.
Given a bounded function $m$ on ${\Bbb Z}\times {\Bbb Z}$, 
 we call the map 
 \begin{eqnarray}
 M_m: (a_{k,j})\mapsto (m(k,j)a_{k,j})\label{sm}
 \end{eqnarray}
  a Schur multiplier with symbol $m$.
The study of the boundedness of Schur multipliers with respect to the Schatten $p$-norms   has a rich history (\cite{Be77, Ar82, BoFe84, BeGi94,  Pi98, Ha99, ClPaSuWi00, Pi01, AlPe02, DoGi05}). The recent study of non-commutative analysis on the approximation properties of operator functions and operator algebras  (\cite{HaStSz10, NeRi11, CaSa15, 
PST15, PST17, LaSa18, CPSZ19, PaRiSa22, MRX22, CGPT22a} etc.), especially Lafforgue/de La Salle's recent work (\cite{LaSa11}) on the approximation property of higher rank Lie groups redraws  a lot of attention to the  boundedness of Schur multipliers  for the case where  $1<p\neq2<\infty$.  
Conde/Gonz\'alez/Parcet/Tablate   recently  proved   a H\"ormander-Mikhlin type   Schur multiplier theory for   $S^p, 1<p\neq 2<\infty$ in their remarkable work \cite{CGPT22a}. 

In this article, we prove a Marcinkiewicz type Schur multiplier theory. H\"ormander-Mikhlin  type multipliers and  Marcinkiewicz  type multipliers are rooted from classical Fourier analysis. Like their counterpart in Fourier analysis, Marcinkiewicz  type Schur multipliers  are a larger class of multipliers and their $p$-boundedness is more subtle as shown in the  work of Tao and Wright (\cite{TaWr01}) that the $L^p$-bounds of Marcinkiewicz  Fourier multipliers  is of order $p^{3/2}$ as $p\rightarrow\infty$. 

When $m$ is of Toeplitz type, i.e. $m(k,j)=\dot m(k-j)$ for some function $\dot m:\Z\to {\Bbb C}$, one may apply a well-known transference method and obtain  bounded Schur multipliers from the classical Fourier multiplier theory. %, for example the Marcinkiewicz multiplier theory (see e.g. \cite[Theorem 6.2.2]{Grafa08}). 
 J. Bourgain (\cite[Theorem 4, Corollary 20]{Bou86})'s work of scalar-valued Fourier multipliers acting on Schatten $p$-valued functions  implies that, the   following  Marcinkiewicz type  condition is sufficient for the boundedness of $M_m$ on the Schatten $p$-classes for all $1<p<\infty$,
\begin{eqnarray}\label{mdot}
\sum_{2^{n-1}\leq |k|< 2^{n}}|\dot m(k+1)-\dot m(k)| <C,
\end{eqnarray}
for all $n\in \N$.  
Let $\dot m_\varepsilon(k)=\varepsilon_n$ for $2^{n-1}\leq |k|< 2^{n}$. Then the associated multiplier $M_{m_\varepsilon}$ is bounded for any  sequence $\varepsilon_n=\pm1$.

To extend Bourgain's result to general non-Toeplitz type Schur multipliers, one may  ask whether the    condition that 
\begin{eqnarray}
  \sum_{2^{n-1}\leq |k|< 2^{n}}|m(k+j+1,j)-m(k+j,j)| <C \label{gMarc}
\end{eqnarray} 
for all $n\in{\Bbb N}, j\in \Z$, implies the $S^p$ boundedness of general Schur multipliers. The answer is yes if $m$ is Toeplitz since condition \eqref{gMarc} reduces to Bourgain's condition \eqref{mdot} in that case. %So .    %The proof of Theorem \ref{thm:main} relies on analysis of operator-valued Fourier multipliers on $S^p$-valued functions.   
The answer would be yes for general Schur multipliers  as well if   the family $M_{m_\varepsilon}$, defined after condition \eqref{mdot}, is $R$-bounded for any family of sequences $\varepsilon=(\varepsilon_k)_k$ valued in $\{\pm1\}$. This implication was proved in  the work of Berkson/Gilliespie (\cite{BeGi94,DoGi05}) and Clement/Pagter/Sukochev/Witvliet (\cite{ClPaSuWi00}), in which they studied the connection between the vector-valued Littlewood-Paley theory  and the Marcinkiewicz multiplier Theory. 
%Hyt\"onen/Weiss's  work \cite{HW08, Lan98} suggested that this would require    the Schatten $p$ classes to have Pisier's  property $\alpha$, which is not the case.
We show in Section \ref{sec:Counterex}  that this is not true in general and the condition \eqref{gMarc} is not sufficient for the $S^p$-boundedness of the associated Schur multiplier.  \footnote{This also shows that the family of ``Littlewood-Paley operators" $M_{m_\varepsilon}$mentioned above is NOT $R$-bounded.}

The main result of this article is the following.%, which says that  both \eqref{gMarc} and its adjoint version are needed for the desired boundedness. 
 \begin{theorem} \label{thm:main}
\vskip.1cm
 $M_m$ defined in (\ref{sm}) extends to a   bounded map on the Schatten $p$-classes $S^p$ for all $1<p<\infty$ with bounds $\lesssim (\frac{p^2}{p-1})^3$, if  $m$ is bounded and there exists a constant $C$ such that  
 \begin{eqnarray}
  \sum_{2^{n-1}\leq |k|< 2^{n}}|m(k+j+1,j)-m(k+j,j)| <C \label{Marc}\\
 \sum_{2^{n-1}\leq |k|<2^{n}}|m(j,k+j+1)-m(j,k+j)| <C \label{Marr},
\end{eqnarray} 
for all $n\in{\Bbb N}, j\in \Z$.
 \end{theorem}
 The writing of this article was motivated by the recent article \cite{CGPT22a}, although the third author had known Theorem \ref{thm:main} previously.  The authors of \cite{CGPT22a} further studied Schatten-$p$-classes indexed in $d$-dimensional Euclidean spaces aiming possible applications to the approximation properties of  higher rank Lie groups. Following this trend, we extend Theorem \ref{thm:main}  to the higher dimensional cases as well.  The proof of Theorem \ref{thm:main} relies on the crucial property  that a Schur multiplier is an  operator-valued Fourier multiplier multiplying from the left, and is simultaneously  an  operator-valued Fourier multiplier multiplying from the right. This property was  already used by the authors of \cite{CGPT22a} in proving a H\"ormander-Mikhlin type criterion for  the boundedness of Schur multipliers.

Theorem \ref{thm:main} implies   a new unconditional decomposition for Schatten $p$ classes. For $(n,\ell)\in {\Bbb Z}\times \Z$, let $E_{0,\ell}= \{(\ell,\ell)\}\subset {\Bbb Z}\times \Z$. Let
 $$E_{n,\ell}= \{(k,j)\in {\Bbb Z}\times\Z; 2^{n-1}\leq k-j< 2^n , \ell2^{n}\leq k< (\ell+1)2^{n}\}$$ for $n>0$, and $$E_{n,\ell}= \{(k,j)\in {\Bbb Z}\times\Z;   -2^{|n|}<k-j\leq -2^{|n|-1}, \ell2^{|n|}\leq k< (\ell+1)2^{|n|}\}$$ for $n<0$. We then have the decomposition 
 $${\Bbb Z}\times\Z=\bigcup_{(n,\ell)\in\Z\times\Z} E_{n,\ell}.$$
 Let $P_{n,\ell}$ be the projection onto the $span\{e_{k,j}, (k,j)\in E_{n,\ell}\}$. It is easy to see that $\sum_{n,\ell} \varepsilon_{n,\ell} P_{n,\ell}$ is a Schur multiplier satisfying the assumptions of Theorem \ref{thm:main} for any bounded sequence $\varepsilon_{n,\ell}$. We then obtain an unconditional decomposition of $S^p$.
 
\begin{corollary} \label{cor:main} $\sum_{n,\ell} \varepsilon_{n,\ell} P_{n,\ell}$ extends to a   bounded map on $S^p$ for all $1<p<\infty$ for any bounded sequence $\varepsilon_{n,\ell}$.
\end{corollary}

%$R$-boundedness

We will prove Theorem \ref{thm:main}  in Section \ref{sec:ProofofThm}. We will explain how to extend Theorem \ref{thm:main} to the higher dimensional case  in Section \ref{sec:highDimension} and explain that the ball-type Schur multipliers  remain bounded on $S^p(\ell^2(\Z^d))$  for $d>1$ (  Example \ref{example4.4}), contrary to the behavior of Fourier multipliers. We  will show that the condition \eqref{Marc} alone is not sufficient in Section \ref{sec:Counterex}, and explain an operator-valued version of Theorem \ref{thm:main} in Section \ref{sec:OperatorValued}.
\section{Preliminaries}
Given $d\in \N$, denote by $\mathcal{B}(\ell^2(\Z^d))$  the set of bounded linear operators on $\ell^2(\Z^d)$. We represent the operator $A\in \mathcal{B}(\ell^2(\Z^d))$ as $A=(a_{i,j})_{(i,j)\in \Z^d\times \Z^d}$ with $a_{i,j}=\langle Ae_i,e_j\rangle $ for the canonical basis $\{e_i\}$ of $\ell^2(\Z^d)$. Given a bounded function $m$ on $\Z^d\times \Z^d$, 
the associated Schur multiplier 
\[
	M_m(A)=(m(i,j)a_{ij})
\]
extends to a bounded  operator on the Hilbert-Schmidt class $S^2(\ell^2(\Z^d))$. 
We call $m$ the symbol of $M_m$. Recall that the Schatten $p$-class $S^p, 1\leq p<\infty$ is the collection of all compact operators $A$ with a finite  Schatten-$p$ norm, which  is defined as
\begin{eqnarray}
\|A\|_p&=& \left(tr \left[ (A^*A)^{\frac p2} \right] \right)^{\frac1p}= \left( \sum_{i}s_i^p \right)^\frac1p, \label{sp}
% \|A\|_\infty&=&\sup_ {\|x\|_{\ell_2}=1} \|Ax\|_{\ell_2}.
 \end{eqnarray} 
 for $1\leq p<\infty$, where $s_i$ is the $i$-th singular value of $A$. 
The Schatten $p$ norm is unitary invariant and does not depend on the choice of the orthonormal basis. The Schatten-class $S^p, 1\leq p<\infty$ and $B(\ell^2(\Z^d))$ share many similar properties to $\ell^p, 1\leq p\leq \infty$. In particular,  the dual space of  $S^1$ (resp. $S^p, 1<p<\infty$) is  isomorphic to $B(\ell^2)$ (resp. $S^{\frac p{p-1}})$. The family forms an interpolation scale that, for $1<p<\infty$, $$[B(\ell^2),S^1]_{\frac{1}p}=S^p.$$ However, $S^p$ does not admit an unconditional basis whenever $p\neq 2$.
 We will prove that, for  $m$ satisfying additional conditions (\ref{Marc}) and (\ref{Marr}),    $M_m$ extends to a bounded map on  $S^p$  for all $1<p<\infty$,  which immediately implies the unconditional decomposition for $S^p$ as stated in Corollary \ref{cor:main}.

%Let $M$ be a bounded complex-valued function on  integer group $\Z^d$, the associated Fourier multiplier $T_{M}$ is the unique weak* continuous map on the von Neumann algebra $L^{\infty}(\T^d)$ extending $\chi_n\to M(n)\chi_n$, where $\chi_n$ is the character on torus $\T^d=[0,1]^d$ that maps $e^{2\pi i t}\to~e^{2\pi i n \cdot t}$ for $t\in \T^d$. 

%Suppose that $M=(M(i-j))_{i,j\in \Z^d}$ which is called Toeplitz or Herz-Schur multiplier, it is proved by that $S_{M}$ is bounded if and only if $T_{M}$ is completely bounded and the completely bounded norms are equal, see \cite{BoFe84,CaSa15}.

%Inspired by the method in the paper \cite{CondeAlonso2022} in which the Schur multiplier is controled by the two ``twisted Fourier'' multipliers. We will also use the similar transference method to prove a Marcinkiewicz type condition for the boundedness of the Schur multipliers in the Schatten $p$-classes. Before that, Let's recall the classical Marcinkiewicz multiplier theorem whose proof can be found from e.g. \cite{Edwards1977}, \cite{Grafakos2008}.
Given $d\in {\Bbb N}$, we denote by $L^p(\T^d;S^p)$ the space of $S^p$-valued Bochner integrable function $f$, such that 
$$\|f\|_{L^p}= \left( tr \left[ \int_{[0,1)^d}|f|^p(z) d\theta \right] \right)^\frac1p<\infty,$$
Here we identify $z=e^{i2\pi \theta}$ with $\theta \in [0,1)^d$, and $|f|^p=(f^{*} f)^\frac p2$ is defined via the functional calculus. 

 For $f\in L^p(\T^d;S^p)$ with $ 1<p<\infty$, we have the 
Fourier expansion as follows: 
$$f(z)\sim\sum _{k\in \Z^d} \hat{f}(k) z^k,$$ with $\hat f(k)=\int_{[0,1]^d}f(z) \bar z^{k} d\theta \in S^p$. Given  $R$ a finite subset of ${\Bbb Z}^d$, denote by $S_{R} f$ the   partial Fourier sum  
\begin{eqnarray}
	S_{R} f(z)= \sum_{k\in R} \hat{f}(k) z^k.\label{SR}
\end{eqnarray}

% Smooth version...
\begin{comment}
Suppose $\eta$ is a $C^{\infty}$ function of compact support on $\R$ with the properties that $\eta(x)=1$ for $|x|\leq 1$ and $\eta(x)=1$ for $|x|\geq 2$. Let $\delta(x)=\eta(x)-\eta(2x)$, then $\delta$ is also a $C^{\infty}$ function with compact support on $[-2,-\frac{1}{2}]\cup [\frac{1}{2}, 2]$. Then for any $x\in\R$, there is 
\[
	\sum_{k=-\infty}^{\infty} \delta(2^{-j}x)=1. \quad \text{all~}x\neq 0.
\]
\end{comment}
Choose $\delta\in C^{\infty}(\R)$ such that   $0\leq\delta\leq 1$, $supp(\delta)\subset [-2\sqrt d ,-\frac{1}{4}]\cup[\frac{1}{4},2\sqrt d]$ and
 $\delta(x)=1$ when $\frac12\leq |x|\leq \sqrt d$. For $j\geq0$, define $\delta_j(x):=\delta(2^{-j}x)$. For $f\in L^2(\T^d;S^2)$, we define
 \[
 S_jf(z)=\sum_{k\in\Z^d} \delta_j(|k|_{2}) \hat{f}(k) z^k.\]
We denote by $|k|_2$ the $\ell_2$ norm of $k\in {\Z}^d$ in the formula above and will denote by $|k|_\infty$ the $\ell_\infty$ norm of $k$. Let $(E_j)_{j\geq 0}$ be the cubes with squared holes in $\Z^d$  given by 
\begin{equation}\label{eq:dyadic}
E_j =
\begin{cases}
		\{k\in\Z^d: 2^{j-1}\leq  |k|_\infty< 2^j\} & j>0,\\
		\{0\} & j=0.
	\end{cases}
\end{equation} Note our construction implies $ S_{E_j}S_j=S_{E_j}$ which we will need later.

For a sequence $(f_k)$ in $L^p( {\Bbb T}^d;S^p)$, we use the classical notation 
$$\|(f_k)\|_{L^p(\ell_2^{c})}=\|(\sum_k|f_k|^2)^\frac12\|_{L^p(\T^d;S^p)},\quad \|(f_k)\|_{L^p( \ell_2^{r})}=\|(\sum_k|f^*_k|^2)^\frac12\|_{L^p(\T^d;S^p)},$$ and 
$$\|(f_k)\|_{L^p(\ell_2)}=\left\{\begin{array}{ll} \max\{
\|(f_k)\|_{L^p(\ell_2^{c})}, \|(f^*_k)\|_{L^p(\ell_2^{c})}\}&
\textrm{if } 2\leq p\leq \infty\\ \inf_{ y_k+z_k=f_k}
\|(y_k)\|_{L^p( \ell_2^{c})}+ \|(z_k)\|_{L^p( \ell_2^{r})}&
\textrm{if } 0<p<2\end{array}\right..$$
%We refer to \cite{P98} for noncommutative vector-valued $L^p$-spaces. 
The above definition is justified by the following noncommutative Khintchine inequality:

\begin{lemma} \label{lemmakhin}(\cite{L86}, \cite{LP91}) Let $(\varepsilon_k)$ be a sequence of  independent Rademacher random variables. Then for $1\leq p<\infty$, 
\begin{eqnarray}
\alpha_p^{-1}E_\varepsilon\|\sum_k \varepsilon_k\otimes f_k\|_{L^p(\T^d;S^p)}&\leq& \|(f_k)\|_{L^p( \ell_2)}\leq \beta_p E_\varepsilon \|\sum_k \varepsilon_k\otimes f_k\|_{L^p(\T^d;S^p)}. \label{khin}
\end{eqnarray}
\end{lemma}
 The  optimal constant $\beta_p$
is no greater than $\sqrt 3$ for $1\leq p\leq 2$ and is $1$ for $p\geq 2$ (see
\cite{HM07}). $\alpha_p$ is $1$ for $1\leq p\leq 2$ and is of order
$\sqrt p$ as $p\rightarrow \infty$. 
(\ref{khin}) was pushed further to
the case where $0<p<1$ by Pisier and Ricard (see \cite{PR14}).

\medskip

 We will need   the following noncommutative Littlewood-Paley theorem on $\Z^d$. 
\begin{lemma}\label{thm:Littwood-P}
 There is a constant $C_d>0$, that depends only on $d$, such that, \begin{eqnarray}
&&\|(S_j f )_{j\geq0}\|_{L^p(\ell_2)} \leq C_d \frac{p^2}{p-1}\|f\|_{L^p(\T^d;S^p)}\label {LP},\\
&&\|f\|_{L^p(\T^d;S^p)} \leq C_d \frac{p^2}{p-1}\|(S_{E_j} f )_{j\geq0}\|_{L^p(\ell_2)}  \label{LP1},
\end{eqnarray}
for  all $f\in L^{p}(\T^d;S^p)$  and $1< p<\infty$.

\end{lemma}
\begin{proof} This lemma is well-known. We explain here that the dependence of the constants on $p$ is  in the order of   $\frac{p^2}{p-1}$. Given  $\varepsilon_j=\pm1$, let  $M_\varepsilon=\sum_{j\geq0} \varepsilon_j S_j$. Our choice of $S_{j}$'s makes $M_{\epsilon}$ a so-called H\"ormander-Mikhlin multiplier, which in particular is a Calderon-Zygmund operator. So it is bounded from the classical Hardy space $H^1$ to $L^1$. Moreover, it  is from $H^1$ to $H^1$ since it commutes with the classical Hilbert transform. By \cite[Theorem 6.4]{Me07}, it extends to a bounded operator on the semi-commutative BMO space BMO$_{cr}(L^{\infty}(\T^d)\bar{\otimes} \B(\ell^2(\Z^d)))$. (\ref{LP}) then follows from the interpolation result \cite[Theorem 6.2]{Me07} and the Khintchine inequality (\ref{khin}). 
(\ref{LP1}) follows from (\ref{LP}) by duality because of the identity $\langle f, g\rangle=\sum_j\langle S_{E_j}f, S_{E_j}g\rangle=\sum_j\langle S_jf, S_{E_j}g\rangle$.

\end{proof}

\begin{comment}
\begin{theorem}[Marcinkiewicz,\cite{Marcinkiewicz1939}]
Let $(\Delta_j)_{j\in\Z}$ be the dyadic decomposition of  $\Z$. Suppose $M(n)$ is a function on $\Z$ such that 
\[
	\sup_{n\in\Z} |M(n)|\leq C,\quad \sup_{j\in\Z}\sum_{n\in\Delta_j}|\Delta M(n)|\leq C
\]
where $\Delta M(n)=M(n+1)-M(n)$. Then 
\[
	\|T_M f\|_p \leq C_p C \|f\|_p
\]
for every $p\in (1,\infty)$, where $C_p$ is a number depending only on $p$.
\end{theorem}
\end{comement}

\begin{comment}
%Another tool we need is the following noncommutative Littlewood-Paley theorem on $\Z^d$. %{\color{blue}(Need reference ...)}
{ %\color{red}
	\begin{lemma}\label{thm:Littwood-P}
		Suppose $f(z)=\sum_{n\in \Z}\hat{f}(n)z^n\in L^{p}(\T^d),S^p)$ where $\hat{f}(n)\in S^p$ and $p\in (1,\infty)$, then
		\[
		\|f\|_{L^p(\T;S^p)} \asymp 
		\begin{cases}
		\inf\left\{ \left\|\left(\sum_{j\in\Z}g_j g_j^*\right)^{\frac12} \right\|_{L^p(\T;S^p)}+\left\|\left(\sum_{j\in\Z}h_j^* h_j\right)^{\frac12} \right\|_{L^p(\T;S^p)}:S_{j}f=g_j+h_j\right\} & p\in(1,2]\\
   \left\| \left(\sum_{j\in\Z} (S_jf)^*(S_jf)\right)^{\frac12}\right\|_{L^p(\T;S^p)}+ \left\| \left(\sum_{j\in\Z} (S_jf) (S_jf)^*\right)^{\frac12}\right\|_{L^p(\T;S^p)} & p\in [2,\infty).
		\end{cases}
   \]
	\end{lemma}
}
\end{comment}

\begin{lemma}\label{Hilbert}
Suppose $R_j$ is a family of boxes with sides parallel to the axes in $\R^d$. Then there is a constant $C_d>0$ that depends only on d, such that for all $1<p<\infty$,  and for all families of measurable functions $f_j$ on $\R^d$, we have
\begin{equation}
	\left\| \left(\sum_j | S_{R_j}(f_j) |^2\right )^{\frac12}\right\|_{L^p(\T^d;S^p)} \leq C_d \left(\frac{p^2}{p-1}\right)^d \left\| \left( \sum_j |f_j|^2\right)^{\frac12}\right\|_{L^p(\T^d;S^p)}.
\end{equation}
\end{lemma}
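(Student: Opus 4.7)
The plan is to reduce the $d$-dimensional square-function inequality for rectangular Fourier projections to $d$ iterations of a one-dimensional interval-projection inequality, paying one factor of $p^2/(p-1)$ per coordinate direction. The starting observation is that every axis-parallel rectangle factors as $R_j=I_{j,1}\times\cdots\times I_{j,d}$, so the symbol factors as $\chi_{R_j}(\xi)=\prod_{k=1}^d\chi_{I_{j,k}}(\xi_k)$. Writing $P^{(k)}_I$ for the Fourier multiplier on $\R^d$ with symbol $\chi_I$ acting in the $k$-th variable only, this gives
\[
(\chi_{R_j}\widehat f_j)^{\vee}=P^{(1)}_{I_{j,1}}P^{(2)}_{I_{j,2}}\cdots P^{(d)}_{I_{j,d}}f_j,
\]
and the partial interval multipliers in different directions commute.

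The heart of the argument is the one-dimensional claim that, for any family of intervals $I_j\subset\R$ and any functions $g_j$ with values in $S_p$,
\[
\Bigl\|\Bigl(\sum_j|P^{(k)}_{I_j}g_j|^2\Bigr)^{1/2}\Bigr\|_{L^p(S_p)}\lesssim \frac{p^2}{p-1}\Bigl\|\Bigl(\sum_j|g_j|^2\Bigr)^{1/2}\Bigr\|_{L^p(S_p)}.
\]
To prove this I would write $\chi_{[a,b)}=\chi_{[a,\infty)}-\chi_{[b,\infty)}$ and $\chi_{[a,\infty)}(\xi)=\tfrac12(1+\sign(\xi-a))$. Thus each $P^{(k)}_{I_j}$ is, up to scalar modulations, a linear combination of the identity and the Hilbert transform $H^{(k)}$ in direction $k$. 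Scalar modulations are pointwise isometries in $S_p$ and do not affect $|g|^2=g^*g$, so everything reduces to the $\ell^2$-valued Hilbert transform estimate on $L^p(S_p)$. That estimate is the standard consequence of Bourgain's theorem that $S_p$ is a UMD space with UMD constant $\lesssim p^2/(p-1)$, combined with the Lust-Piquard noncommutative Khintchine inequalities that let one interpret the column square function.

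To complete the proof, I would iterate the one-dimensional bound once in each coordinate direction, freezing the other $d-1$ variables by Fubini so that the operator being estimated at each step is genuinely a one-dimensional Fourier projection applied to an $L^p(\R;S_p(\ldots))$-valued function. After $d$ rounds one arrives at $(p^2/(p-1))^d$ times the square function of $(f_j)$, which is the desired inequality.

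The main obstacle I anticipate is the careful noncommutative bookkeeping of the square function: the expression $(\sum_j|\cdot|^2)^{1/2}$ is the column norm in $S_p$, and for $1<p<2$ one must combine it with the row version via the $\ell^2_{cr}$ space that already appears in Lemma~\ref{thm:Littwood-P}. I need to check that the modulation trick, the $\ell^2$-valued Hilbert transform estimate, and the Fubini iteration are all compatible with the column/row splitting and preserve the $p^2/(p-1)$ factor at each step. Once this is in place, the argument is a clean product estimate.
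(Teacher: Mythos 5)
The paper states Lemma~\ref{Hilbert} without proof or explicit citation, so there is no in-text argument to compare your attempt against; the lemma is being invoked as known. Your derivation is the standard one for such operator-valued Rubio~de~Francia/Marcinkiewicz square-function estimates: factor $\chi_{R_j}$ as a product of one-dimensional interval indicators, reduce each interval projection to modulated Hilbert transforms via $\chi_{[a,b)}=\tfrac12\bigl(\sign(\cdot-a)-\sign(\cdot-b)\bigr)$, and iterate once per coordinate by Fubini. The accounting is also right: the $\ell^2$-valued Hilbert transform on $L^p(S_p)$ has norm controlled by the UMD constant of $S_p$, which is $\sim p^2/(p-1)$, and since the UMD constant of $L^p(\mu;X)$ equals that of $X$, the $d$ iterations multiply to $(p^2/(p-1))^d$ rather than compounding further.

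Two small points to tighten. You do not actually need the Lust--Piquard Khintchine inequality here: the column square function $\bigl\|(\sum_j g_j^*g_j)^{1/2}\bigr\|_{S_p}$ is by definition the $S_p$-norm of the single block operator $\sum_j g_j\otimes e_{j,1}$ on a larger Hilbert space, and the column subspace it lives in is $1$-complemented there (project by right multiplication by $1\otimes e_{1,1}$). The $\ell^2$-column-valued Hilbert transform bound is therefore inherited directly from the UMD property of the larger Schatten class, with the same constant, for every $1<p<\infty$; there is no need to pass through $\ell^2_{cr}$ at this stage. Also, since the modulations carry the index $j$, you should push them onto the $g_j$ before applying the one fixed Hilbert transform; this is harmless because $|e^{ia_j\cdot}g_j|=|g_j|$ pointwise on both sides of the inequality, exactly as you indicate. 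With these clarifications your argument is complete. (The lemma as printed has typographical slips --- $\R^n$ versus $\R^d$, and $L^p(\T;S_p)$ versus $L^p(\mu;S_p)$ --- which your reformulation silently corrects.)
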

\begin{proof} Assume $d=1$, $R_j=[a_j,b_j]$. Let $T_{a}$ to be the operator that sends $f(\cdot)$ to $f(\cdot)e^{i2\pi a_j(\cdot)}$. Let $P_+$ be the analytic projection. Then
$$S_{R_j}=T_{a_j}P_+T_{-a_j}-T_{b_j}P_+T_{-b_j}.$$
Note that $|T_{a_j}f_j|^2=|f_j|^2$ and we obtain the inequality for $d=1$ by the boundedness of $P_+$.
The case $d>1$ holds due to Fubini's theorem.
\end{proof}

%Also we are going to use the following Cauchy-Schwarz type lemma. {%\color{blue}( Add proof later)}
%\color{red}
	\begin{lemma}\label{lem:Cauchy-S}
		For sequences $\left(a_n \right), \left(c_n \right) \in B(H)$, we have
\begin{equation}\label{eq:C-S}
	\left|\sum_{n} a_n^* c_n\right|^2 \leq \left\|\sum_{n} a^*_na_n \right\| \left( \sum_{n} c^*_nc_n \right).
\end{equation}
	\end{lemma}
	\begin{proof} Given any $v\in S^2$, we have by Cauchy-Schwartz inequality that
		\[
		 tr \left( v^* \left|\sum_{n} a_n^* c_n  \right|^2 v \right)=  tr\left( \left|\sum_{n} a_n^* c_n v \right|^2 \right) \leq \left\|\sum_n a_n^*a_n \right\| tr \left[ v^* \left( \sum_{n}c^*_nc_n \right) v \right]   .
		\]
		Since $v$ is arbitrary, we obtain \eqref{eq:C-S}.
	\end{proof}

 \section{Proof of Theorem \ref{thm:main}}\label{sec:ProofofThm}

For $z\in \T^d$ given,  let $\Pi_z$ be  the *-homomorphism on  $\B(\ell^2(\Z^d))$ defined as  $$\Pi_z(A)=U_zAU_z^*$$ with $U_z$ the unitary sending  $e_{k}$ to $z^k e_{k}$.  
It is easy to see that $\Pi_z$ has the following presentation\begin{equation}\label{eq:transference}
	\Pi_z: A=(a_{k,j})\longmapsto (a_{k,j} z^{k-j}),
\end{equation}
with $k,j\in \Z^d$. $\Pi_z$ defines an isometric isomorphism  on  $\B(\ell^2(\Z^d))$  and     $S^p(\ell^2(\Z^d))$    for all $1\leq p<\infty$ because all these  norms are unitary invariant.
 Considering $z$ as a variable on $\T^d$, define $\Pi: \B(\ell^2(\Z^d))\to L^{\infty}(\T^d)\otimes \B(\ell^2(\Z^d))$ as
\begin{equation}\label{eq:transference1}
	\Pi(A)(z)=\Pi_z(A),
\end{equation}
Then $\Pi$ is an isometric isomorphism from $\B(\ell^2(\Z^d))$ to $L^{\infty}(\T^d)\otimes \B(\ell^2(\Z^d))$ and from $S^p(\ell^2(\Z^d))$ to $L^{p}(\T^d; S^p(\ell^2(\Z^d)))$  for all $1\leq p<\infty$.

Given a symbol $m=(m (i,j))_{i,j\in \Z^d}$ and $ A=(a_{i,j})_{i,j\in\Z^d}\in S^p(\ell^2(\Z^d ))$, set
\begin{align}\label{eq:muliplierNotation}
&M_{l}(n)=\sum_{s\in\Z^d}m(s,s-n)\, e_{s,s},\quad M_r(n)=\sum_{s\in \Z^d } m(s+n,s)\,e_{s,s},\\
&A(n)=\sum_{s\in\Z^d } a_{s,s-n} e_{s,s-n}=\sum_{s\in\Z^d } a_{s+n,s} e_{s+n,s}, \nonumber
\end{align}
for $n\in \Z^d$. Here $e_{s,t}$ denotes the operator on $\ell^2(\Z^d)$ sending $e_t$ to $e_s$.
Then $M_l(n), M_r(n)\in \B(\ell^2(\Z^d ))$ with norm bounded by $ C$ , $A(n)\in S^p(\ell^2(\Z^d ))$ for all $n\in \Z^d$, and $$\Pi(A)(z)=\sum_n A(n)z^n,\ \ M_m(A)=\sum_n M_l(n)A(n)=\sum_n A(n)M_r(n).$$
Here  $M_l(n)A(n)$ and $A(n)M_r(n)$ denote the  products of  operators in $\B(\ell^2(\Z^d)).$
Denote by $f=\Pi(A)$, i.e.
\begin{equation}\label{f}
f(z)=\sum_{n\in\Z^d} A(n)z^n.
\end{equation}
Denote by $\Pi(S^p)$  the image of $\Pi$, i.e. the subspace of $L_p(\T^d;S^p)$ consisting of all $f$ in the form of (\ref{f}).
Define the operator-valued Fourier multiplier $T_M$ on $\Pi(S^p)$ as
\begin{align} 
	T_{M}f(z)
	=\sum_{n\in\Z^d} M_l(n)A(n)z^n \label{eq:transf1}
\end{align}
Note that $M_l(n)A(n)=A(n) M_r(n)$ for all $n\in \Z^d$, we can represent $T_M$ as a  multiplier from the right  that
\begin{align} 
	T_{M}f(z)
	= \sum_{n\in\Z^d } A(n) M_r(n) z^n. \label{eq:transf2}
\end{align}
$T_M$ is defined so that the following identity holds $$T_{M}f=\Pi (M_m (A)).$$
Since $\Pi$ is a trace preserving $*$-homomorphism, we have
\begin{equation}\label{trans3}
	\|A\|_{S^p}=\|f\|_{L^p(\T^d;S^p)}, \ \ \|M_mA\|_{S^p}=\|T_{M}(f)\|_{L^p(\T^d;S^p)}.
\end{equation}
In order to prove $M_m$'s boundedness on $S^p$, we only need to prove that $T_{M}$ is  bounded  on $\Pi(S^p)$, i.e. the subspace of $L_p(\T^d;S^p)$ consisting all $f$ in the form of (\ref{f}). By Lemma \ref{thm:Littwood-P} and the transference relation \eqref{trans3}, it is sufficient to show the following inequality
\begin{equation}\label{eq:L-P1}
	\left\| \left( \sum_{j\geq0} |S_{E_j}(T_{M}f)|^2 \right)^{\frac12} \right\|_{L_p(\T^d;S^p)}\leq C_d (\frac{p^2}{p-1} )^d\left\| \left( \sum_{j\geq0} |S_jf|^2 \right)^{\frac12} \right\|_{L_p(\T^d;S^p)}
\end{equation}
and its adjoint form  
\begin{equation}\label{eq:L-P2}
	\left\| \left( \sum_{j\geq0} |(S_{E_j}(T_{M}f))^*|^2 \right)^{\frac12} \right\|_{L_p(\T^d;S^p)}\leq C_d (\frac{p^2}{p-1} )^d\left\| \left( \sum_{j\geq0} |(S_jf)^*|^2 \right)^{\frac12} \right\|_{L_p(\T^d;S^p)}
\end{equation}
for $p\geq 2$. By duality, we will obtain $M_m$'s boundedness on $S^p$ for $1<p<2$ as well. 

We will use \eqref{eq:transf1} as the presentation of $T_M$ to prove \eqref{eq:L-P1} and will use the presentation \eqref{eq:transf2} to prove  \eqref{eq:L-P2}.  
Note that $E_j$ is symmetric so $(S_{E_j}(T_{M}f))^*=S_{E_j}(T_{M}f)^*$ and \begin{eqnarray*}
(T_{M}f)^*= ( \sum_{n\in\Z^d } A(n) M_r(n) z^n)^*=  \sum_{n\in\Z^d } (M_r(-n))^*(A(-n))^*  z^n.
\end{eqnarray*}
So, both $S_{E_j}(T_Mf)$ and $(S_{E_j}(T_mf))^*$ have the multiplier symbols   on the left. This allows us to write the corresponding squares  in the forms with  $M_r$ or $M_l$ seating in the middle for both $S_{E_j}(T_Mf)$ and $(S_{E_j}(T_mf))^*$ and avoid the usual trouble caused by the non-commutativity of the operator products. After noting these facts, the argument for the case $d=1$ is rather standard, which we record below.

\medskip

{ \it Proof of Theorem \ref{thm:main}} 
We now set $d=1$. By the notations in \eqref{eq:muliplierNotation}, the conditions \eqref{Marc} and  \eqref{Marr}  are equivalent to 
\begin{equation}\label{eq:d=1}
	 \sup_{j\geq0 } \left\|\sum_{n\in E_j} |M_l(n+1)-M_l(n)| \right\|_{\infty}<C, \quad \sup_{j\geq 0} \left\|\sum_{n\in E_j} |M_r(n+1)-M_r(n)| \right\|_{\infty}<C,
\end{equation}
here $E_j$ is defined as in (\ref{eq:dyadic}).
Following the definition (\ref{SR}), we denote by   $S_{(a,b)}$  as,
\begin{equation}
	S_{(a,b)}g(z)= \sum_{a<n<b} \hat{g}(n)z^n
\end{equation}
for $g(z)=\sum_{n\in\Z} \hat{g}(n)z^n\in L^{p}(\T;S^p)$. 
We will deliberately extend the use of this notation and set $$S_{(a,b)}g=-S_{(b,a)}g$$  when $a>b$. 
For $j\in \N$, write  $E_{j,1}=(-2^j,-2^{j-1}] , E_{j,2}=[2^{j-1},2^j)$. Denote by $2^{(j)}_1=-2^{j},2^{(j)}_2=2^{j}$ and 
\begin{eqnarray}
 \Delta M_l(n)=
\begin{cases}
    M_l(n)-M_l(n-1)  \quad n<0\\
    M_l(n+1)-M_l(n)  \quad n>0.
\end{cases}
\end{eqnarray}

%Notice that $S_{(2^{j-1}-1,2^{j}-1)}f=S_{E_j}f$ and $S_{(2^{j+1}-1,2^{j+1})}f=0$. 

By applying summation by parts and the presentation of $T_M f$ in \eqref{eq:transf1} and \eqref{eq:transf2}, we obtain
%\begin{comment}
%\begin{align*}
%	S_{j(T_{M}f)(z) &=\sum S_j(T_{M}f)(z)   \sum_{n\in \Delta_j} M_l(n) A(n)z^n\\
%	&= \sum_{n\in \Delta_j} M_l(n) (S_{(n-1,2^j)} f- S_{n,2^j} f) \\
%	&= M_l(2^{j-1}) S_{(2^{j-1}-1,2^{j}-1)} f(z)- M_l(2^j-1)S_{(2^j-1,2^j)}f(z) + \sum_{n=2^j}^{2^{j+1}-2}\Delta M(n)S_{(n,2^j)} f(z).
%\end{align*}
%\end{comment}
\begin{align}\label{eq:summationByPart1}
	S_{E_j}(T_{M}f) =\sum_{i=1,2}S_{E_{j,i}}(T_{M}f) &=&\sum_{i=1,2} \left(M_l(2_i^{(j-1)}) (S_{E_{j,i}} f)+\sum_{n\in E_{j,i}} \Delta M_l(n) (S_{(n, 2^{(j)}_i)}f)\right)\\
	&=&\sum_{i=1,2} \left((S_{E_{j,i}} f)M_r(2^{(j-1)}_i) +\sum_{n\in E_{j,i}}  (S_{(n, 2^{(j)}_i)}f)\Delta M_r(n)\right) \label{eq:summationByPart2}
\end{align}
with a $\Delta M_r(n)$ defined similarly.
We will use the presentation (\ref{eq:summationByPart1}) to prove (\ref{eq:L-P1}) and will use \eqref{eq:summationByPart2}
to prove  (\ref{eq:L-P2}). The arguments are   similar. So	 we will  only give the argument for (\ref{eq:L-P1}).  We will ignore the term $j=0$ in (\ref{eq:L-P1}) because that $\|S_{E_0}(T_Mf)\|_{L^p(\T;S^p)}\leq C\|f\|_{L^p(\T;S^p)}$. 

%This will help to reduce the usual difficulties caused by the noncommutativity. 
%Then using the operator inequality $|A+B|^2\leq 2|A|^2+2|B|^2$ for $A,B\in \B(\ell^2(\Z))$, we have
%\begin{equation}\label{eq:sum1}
%	\left|S_{E_j} (T_{M} f)\right|^2 \leq 2 \left|M_l(2^{j-1}) (S_{E_j} f)\right|^2 +2 \left|\sum_{n\in E_j}\Delta M(n) S_{(n,2^{j+1})} f\right|^2.
%\end{equation}
%For the right side of the inequality \eqref{eq:sum1}, there is 
%\begin{equation}\label{eq:sumP1}
%	\left|M_l(2^{j-1}) (S_{E_j} f)\right|^2 \leq C_1^2 \left|S_{E_j} f \right|^2.
%\end{equation}

Note  $\Delta M_l(n)$ is a diagonal operator, we can write  $\Delta M_l (n)=a^*_n b_n $ with $a_n,b_n$ diagonal operators and $|a_n|^2=|b_n|^2=|\Delta M_l(n)| $, 
%and $b_n= u |\Delta M_l (n)|^{\frac12} S_{(n,2^j)}f$, 
then by Lemma \ref{lem:Cauchy-S}, we have, for $i=1,2$,
\begin{align}
	\left|\sum_{n\in E_{j,i}}\Delta M_{\ell} (n) S_{(n, 2^{(j)}_i)} f\right|^2 &\leq \left\| \sum_{n\in E_{j,i}} |\Delta M_l(n)| \right\|_{\infty} \left( \sum_{n\in E_{j,i}}|b_nS_{(n,2^{(j)}_i)}f|^2 \right) \nonumber \\
	%&\leq C_2 \left( \sum_{n\in\Delta_j}(S_{(n,2^j)}f)^* |\Delta M_l(n)|S_{(n,2^{j})}f \right) \nonumber \\
	&\leq C   \left( \sum_{n\in E_{j,i}}|b_nS_{(n,2^{(j)}_i)} f)|^2 \right)\\
	&= C   \left( \sum_{n\in E_{j,i}}|S_{(n,2^{(j)}_i)}(b_nS_jf)|^2 \right).\label{eq:sumP2}
\end{align}

% Let $\Delta_n^*=(n,2^{j_n})$, where $j_n=\lfloor \log_2 n \rfloor+1$. Denote $g_n = |\Delta M_l(n)|^{\frac12}~ S_{{j_n}} f$, Then by \eqref{eq:sumP2},
\noindent Thus, 
\begin{eqnarray}\left\| \left( \sum_{j\in\N}\left|\sum_{n\in E_{j,i}}\Delta M_{\ell} (n) S_{(n,2^{(j)}_i)} f\right|^2 \right)^{\frac12} \right\|_{L^p(\T;S^p)}
 %&\leq C_2^{\frac{p}{2}} \left\|\left( \sum_{j\geq 1} \sum_{n\in\Delta_j} \|\Delta M(n)\|_{\infty} |S_{(n,2^{j}})f|^2 \right)^{\frac12}\right\|_{L^p(\T;S^p)}^p \nonumber\\
 \leq C^{\frac{1}{2}} \left\| \left( \sum_{j\in\N}\sum_{n\in E_{j,i}}\left|  S_{(n,2^{(j)}_i)} \left(b_n S_j f\right)\right|^2\right)^{\frac12} \right\|_{L^p(\T;S^p)}
 \nonumber 
 \end{eqnarray}
 By Lemma \ref{Hilbert}, we get 
\begin{align}
&\left\| \left( \sum_{j\in\N}\left|\sum_{n\in E_{j,i}}\Delta M_{\ell} (n) S_{(n,2^{(j)}_i)} f\right|^2 \right)^{\frac12} \right\|_{L^p(\T;S^p)} \leq C_2\frac{p^2}{p-1} C^{\frac{1}{2}}  \left\| \left( \sum_{j\in\N}\sum_{n\in E_{j,i}}\left|  \left(b_n S_j f\right)\right|^2\right)^{\frac12} \right\|_{L^p(\T;S^p)} \nonumber \\
&=  C_2\frac{p^2}{p-1} C^{\frac{1}{2}}  \left\| \left( \sum_{j\in\N} (S_jf)^*  \left(\sum_{n\in E_{j,i}}  |b_n|^2\right) S_jf \right)^{\frac12} \right\|_{L^p(\T;S^p)}   \nonumber\\
& =  C_2\frac{p^2}{p-1} C^{\frac{1}{2}}  \left\| \left( \sum_{j\in\N} (S_jf)^*  \left(\sum_{n\in E_{j,i}}  |\Delta M_l(n)|\right) S_jf \right)^{\frac12} \right\|_{L^p(\T;S^p)}   \nonumber\\
& \leq C_2\frac{p^2}{p-1}C \left\|\left( \sum_{j\in\N} | S_j f|^2\right)^{\frac12}\right\|_{L^p(\T;S^p)} \label{eq:sumP3}
\end{align}
Hence, by \eqref{eq:summationByPart1}, \eqref{eq:sumP3}, and Lemma \ref{Hilbert}
\begin{eqnarray*}\label{eq:normSum}
	&&\left\|\left(\sum_{j\in\N} |S_{E_{j,i}} (T_{M} f)|^2 \right)^{\frac12}\right\|_{L^p(\T;S^p)}\\
	 &\leq&  
	\left\|  \left(\sum_{j\in\N} \left|  M_l(2^{(j-1)}_i)(S_{E_{j,i}} f)\right|^2\right)^\frac12\right\|_{L^p(\T;S^p)} +C\frac{p^2}{p-1}C_2 \left\|\left( \sum_{j\in\N} | S_j f|^2\right)^{\frac12}\right\|_{L^p(\T;S^p)} \nonumber\\
	 &\leq&\left\|C \left(\sum_{j\in\N} \left| (S_{E_{j,i}} f)\right|^2\right)^\frac12\right\|_{L^p(\T;S^p)} +C\frac{p^2}{p-1}C_2 \left\|\left( \sum_{j\in\N} | S_j f|^2\right)^{\frac12}\right\|_{L^p(\T;S^p)} \nonumber\\
	 &=&\left\|C \left(\sum_{j\in\N} \left| (S_{E_{j,i}}S_j f)\right|^2\right)^\frac12\right\|_{L^p(\T;S^p)} +C\frac{p^2}{p-1}C_2 \left\|\left( \sum_{j\in\N} | S_j f|^2\right)^{\frac12}\right\|_{L^p(\T;S^p)} \nonumber\\
	&\leq& C \frac{p^2}{p-1} \left\|\left(\sum_{j\in\N} |S_j  f)|^2 \right)^{\frac12}\right\|_{L^p(\T;S^p)} 
\end{eqnarray*}
for $i=1,2$.
Therefore we finish the proof of  (\ref{eq:L-P1}). The arguments for the adjoint version of (\ref{eq:L-P2}) are similar.
We then complete the proof  of Theorem \ref{thm:main}.

 \begin{corollary}\label{cor3.1} (\cite[Corollary 3.5]{CGPT22a})  The following Mikhlin conditions imply the boundedness of $M_m$ on $S^p$ for all $1<p<\infty$,
\begin{eqnarray}
|m(s, s+k)-m(s,  s+k+1)|\leq \frac C{|k|} \label{Mkc}\\
|m(s+k,s)-m(s+k+1,s)|\leq \frac C{|k|} \label{Mkr}
\end{eqnarray}
\end{corollary}
 
\begin{proof} It is clear that the Mikhlin conditions  \eqref{Mkc},\eqref{Mkr} imply the Marcinkiewicz type conditions \eqref{Marc},\eqref{Marr}.
\end{proof}

\section{The case $d>1$.}\label{sec:highDimension}
In this part, we generalize Theorem \ref{thm:main} to the $d$-dimensional case. Before we proceed to the main statement of the theorem, we need to borrow some notations from the calculus of finite differences.

% notion of difference  done!
% notion of decomposition of the decomposition of $\Z^d$
\begin{definition} Let $\sigma:\Z^d\to \mathbb{C}$ and ${j}=(j_1,\cdots,j_d)\in\Z^d$. Let $\{e_j\}_{j=1}^d$ be standard basis of $\Z^d$, i.e., the $j$-th entry of $e_j$ is 1 and 0 for other entries, for $j=1,\cdots,d$. We define the forward partial difference operators $\Delta_{t_j}$by 
\begin{equation}\label{eq:differenceNotion}
	\Delta_{t_j} \sigma (t) \coloneqq \sigma(t+ e_j)-\sigma(t), \quad
\end{equation}
and for $\mathbf{\alpha}\in \{0,1\}^d$, define
\begin{equation*}
	\Delta_t^{\alpha} \coloneqq \Delta_{t_1}^{\alpha_1}\cdots \Delta_{t_d}^{\alpha_d}.
\end{equation*}
\end{definition}
For $\alpha=(1,\cdots 1)\in \{0,1\}^d$, we simplify the notation $\Delta_t^\alpha$ as $\Delta_t$.
Readers can find more information of calculus of finite differences in chapter 3 of \cite{Ruzhansky2010}. 
\subsection{The case $d=2$}
Recall that we have the   partition $\Z^2=  \bigcup_{j\geq 0} E_j 	$ with $E_j$ defined as \begin{equation}\label{eq:partitionZ2}
	E_j=\begin{cases}
		 \{(0,0) \}  & j=0,\\
		 \{(n_1,n_2)\in\Z^2:2^{j-1}\leq |(n_1,n_2)|_{\infty}<2^j \} & j\geq 1.
	\end{cases}
\end{equation}
\begin{comment}
Thus, we would have the following noncommutative Littlewood-Paley theorem on $\Z^2$.
\begin{lemma}\label{thm:Littwood-P2}
		Suppose $f(z)=\sum_{n\in \Z^2}\hat{f}(n)z^n\in L^{\infty}(\T^2)\otimes \B(\ell^2(\Z))$ where $\hat{f}(n)\in \B(\ell^2(\Z^2))$ and $p\in (1,\infty)$. Denote $S_{E_j}$ the partial sum projection on $L^p(\T^2;S^p(\Z^2))$ given by 
$
S_{E_j}f(z)=\sum_{n\in E_j} \widehat{f}(n) z^n
$, where $z\in \T^2$. Then
		\[
		\|f\|_{L^p(\T^2;S^p)} \asymp 
		\begin{cases}
		\inf\left\{ \left\|\left(\sum_{j\in\Z^2}g_j g_j^*\right)^{\frac12} \right\|_{L^p(\T^2;S^p)}+\left\|\left(\sum_{j\in\Z^2}h_j^* h_j\right)^{\frac12} \right\|_{L^p(\T^2;S^p)}:S_{E_j}f=g_j+h_j\right\} & p\in(1,2]\\
   \left\| \left(\sum_{j\in\Z^2} (S_{E_j} f)^*(S_{E_j} f)\right)^{\frac12}\right\|_{L^p(\T^2;S^p)}+ \left\| \left(\sum_{j\in\Z^2} (S_{E_j} ) (S_{E_j} f)^*\right)^{\frac12}\right\|_{L^p(\T^2;S^p)} & p\in [2,\infty).
		\end{cases}
   \]
	\end{lemma}
\end{comment}
\begin{comment}
\begin{figure}[ht]
    \centering
   % \incfig{drawing}
    \def\svgwidth{0.25\columnwidth}
    \import{./figures/}{drawing.pdf_tex}
    \caption{Drawing}
    \label{fig:A drawing}
\end{figure}
\end{comment}
%----------------------
% Need graph here!!!
% Want to change the name of index from  E_j back to E_j
%-----------------------

%--------------
% Statement of the theorem
%--------------
%With the notations of difference and partition of $\Z^2$, we are ready to state the generalization of Theorem \ref{thm:main} in~$\Z^2$.
\begin{theorem} \label{thm:dimension2}
Given $m=(m_{s,t})_{s,t\in\Z^2}\in \mathcal{B}(\ell^2(\Z^2))$. Suppose $m$ satisfies that
\begin{itemize}
	\item[i)] $\sup_{s,t\in\Z^2} |m_{s,t}|<C_1$
	\item[ii)] For any $k\in\N, s\in\Z^2 $,%$t=(t_1,t_2)\in E_k$, 
	there are constants $C_2, C_3$ such that
	\begin{align}\label{eq:left}
	\left(\sum_{t=(t_1,\pm 2^{k-1})\in E_k} |\Delta_{t_1} m_{s,s+t}|  + \sum_{t=(\pm 2^{k-1}, t_2)\in E_k} |\Delta_{t_2} m_{s,s+t}| \right) &<C_2, \\
	 \sum_{ t=(t_1,t_2)\in E_k} |\Delta_t m_{s,s+t}|&<C_3, \label{eq:leftDouble}
	\end{align}
	and 
 \begin{align}\label{eq:right}
	\left(\sum_{t=(t_1,\pm 2^{k-1})\in E_k} |\Delta_{t_1} m_{s+t,s}|  + \sum_{t=(\pm 2^{k-1}, t_2)\in E_k} |\Delta_{t_2} m_{s+t,s}| \right) &<C_2\\
	 \sum_{ t=(t_1,t_2)\in E_k} |\Delta_t m_{s+t,s}|&<C_3, \label{eq:rightDouble}
	 \end{align}
\end{itemize}
Then $M_m$ is a  bounded Schur multiplier on $S^p(\ell^2(\Z^2))$ for $p\in (1,\infty)$ with an upper bound $\lesssim  (\frac{p^2}{p-1})^4$. Here $C_1, C_2$ and $C_3$ are positive absolute constants. 
\end{theorem}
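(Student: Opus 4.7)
The plan is to follow the three-step architecture of Theorem \ref{thm:main} verbatim — transference, noncommutative Littlewood--Paley, and dyadic summation by parts — but carried out in two variables. The new input is a two-dimensional Abel identity that produces three kinds of boundary terms: corner values of $M_l$, edge sums involving a single partial difference, and an interior sum involving the mixed difference $\Delta_{t_1}\Delta_{t_2}$. Hypotheses \eqref{eq:left}--\eqref{eq:leftDouble} and \eqref{eq:right}--\eqref{eq:rightDouble} are shaped to control exactly these three kinds of terms.

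I would first set up the transference $\pi:\B(\ell^2(\Z^2))\to L^\infty(\T^2)\otimes\B(\ell^2(\Z^2))$ as in \eqref{eq:transference}, define $A_l,A_r,M_l,M_r$ by the obvious two-dimensional analogues of \eqref{eq:muliplierNotation}, and reduce matters, via Lemma \ref{thm:Littwood-P} applied on $\T^2$, to proving
\begin{equation*}
\Bigl\|\Bigl(\sum_j |S_{E_j}(T_{\widetilde M}f)|^2\Bigr)^{1/2}\Bigr\|_{L^p(\T^2;S_p)}\lesssim \Bigl(\tfrac{p^2}{p-1}\Bigr)^{3}\Bigl\|\Bigl(\sum_j |S_{\delta_j}f|^2\Bigr)^{1/2}\Bigr\|_{L^p(\T^2;S_p)}
\end{equation*}
and its adjoint, where the $E_j$ are the square annuli of \eqref{eq:partitionZ2}. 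The combinatorial heart is then the 2D summation by parts: split each $E_j$ into four rectangular quadrant-pieces $E_{j,\varepsilon}$ with $\varepsilon\in\{-,+\}^2$, each with a distinguished far corner $a_{j,\varepsilon}=(\varepsilon_1 2^{j},\varepsilon_2 2^{j})$ and two axis-edges lying on $t_i=\varepsilon_i 2^{j-1}$. Iterating the 1D Abel identity of \eqref{eq:summationByPart1} first in $t_1$ and then in $t_2$ yields
\begin{align*}
S_{E_{j,\varepsilon}}(T_{\widetilde M}f)
&= M_l(a_{j,\varepsilon})\,S_{E_{j,\varepsilon}}f
+\sum_{n\in\partial_1 E_{j,\varepsilon}}\Delta_{t_2}M_l(n)\,S_{R_1(n)}f\\
&\quad +\sum_{n\in\partial_2 E_{j,\varepsilon}}\Delta_{t_1}M_l(n)\,S_{R_2(n)}f
+\sum_{n\in E_{j,\varepsilon}}\Delta_{t_1}\Delta_{t_2}M_l(n)\,S_{R(n)}f,
\end{align*}
where $R_1(n),R_2(n),R(n)$ are explicit axis-parallel rectangles determined by $n$ and $a_{j,\varepsilon}$.

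With this identity in hand, each of the four sums is estimated in the square-function norm by a direct copy of the 1D argument. The corner term uses only $\|M_l(a_{j,\varepsilon})\|_\infty\le C_1$ and the relation $S_{E_{j,\varepsilon}}S_{\delta_j}=S_{E_{j,\varepsilon}}$. For each single-difference edge sum, hypothesis \eqref{eq:left} plays the role of \eqref{eq:rowAndColumn}: factoring $\Delta_{t_i}M_l(n)=a_n^*b_n$ with $|a_n|^2=|b_n|^2=|\Delta_{t_i}M_l(n)|$ and applying Lemma \ref{lem:Cauchy-S} as in \eqref{eq:sumP2} bounds the inner sum pointwise by $C_2$; Lemma \ref{Hilbert} with $d=2$ then strips off the rectangular projections $S_{R_i(n)}$ at the cost of $(p^2/(p-1))^2$, after which the diagonal sum $\sum_n|b_n|^2$ collapses back to $\le C_2$. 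The interior double-difference sum is handled identically with \eqref{eq:leftDouble} furnishing the bound $C_3$ on $\sum_{n\in E_j}|\Delta_{t_1}\Delta_{t_2}M_l(n)|$. Summing over $\varepsilon$ and $j$ and tracking the powers of $p^2/(p-1)$ — one from Lemma \ref{thm:Littwood-P} on each side and two from the two-dimensional Lemma \ref{Hilbert} — produces the claimed $C(p^2/(p-1))^4$ bound. The adjoint version is proved the same way using \eqref{eq:summationByPart2}, $A_r,M_r$, and hypotheses \eqref{eq:right}--\eqref{eq:rightDouble}.

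The main obstacle I expect is the bookkeeping of the 2D Abel identity: the rectangles $R_1(n),R_2(n),R(n)$ must be chosen axis-parallel (so that Lemma \ref{Hilbert} still applies), the iterated telescoping must collapse cleanly into exactly the three types of differences appearing in the hypotheses, and the Cauchy--Schwarz factorization $\Delta M_l(n)=a_n^*b_n$ must remain compatible with the diagonal structure of $M_l$. Once this identity is written down correctly, the analytic estimates are a routine two-dimensional upgrade of the 1D proof.
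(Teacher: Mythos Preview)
Your proposal follows the paper's proof essentially line for line: transference via $\pi$, reduction to a square-function estimate by Lemma~\ref{thm:Littwood-P}, decomposition of each annulus $E_j$ into rectangular pieces, a two-variable Abel summation producing corner/edge/interior terms, and control of each term via Lemma~\ref{lem:Cauchy-S} together with Lemma~\ref{Hilbert}. The resulting four summands and the final power count $1+1+2=4$ match the paper's $P_j^1,\dots,P_j^4$ in \eqref{eq:abelianSummationDimension2}--\eqref{eq:partD}.

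One detail needs correction. The four ``quadrant pieces'' you describe --- corner squares with far vertex $(\varepsilon_1 2^j,\varepsilon_2 2^j)$ and near edges on $t_i=\varepsilon_i 2^{j-1}$ --- do \emph{not} cover the square annulus $E_j$: the four side strips such as $\{|n_1|<2^{j-1},\ 2^{j-1}\le n_2<2^j\}$ are missed, so $\sum_\varepsilon S_{E_{j,\varepsilon}}\neq S_{E_j}$ and the reduction in \eqref{eq:combin} fails. The paper instead tiles $E_j$ by four \emph{pinwheel} rectangles $E_{j,1},\dots,E_{j,4}$ of shape roughly $3\cdot 2^{j-1}\times 2^{j-1}$ (for instance $E_{j,1}=J_j\times I_j$ with $I_j=[2^{j-1},2^j)$ and $J_j=(-2^{j-1},2^j)$, and its three rotates). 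Each piece remains an honest axis-parallel rectangle, so your two-variable Abel identity and Lemma~\ref{Hilbert} apply unchanged, and the boundary edges that arise still lie on the inner rim $t_i=\pm 2^{j-1}$, exactly where hypotheses \eqref{eq:left}--\eqref{eq:leftDouble} provide control. With this single adjustment --- or, equivalently, using eight rectangles (four corners plus four sides) --- your argument coincides with the paper's.
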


 Now we come to the proof of Theorem \ref{thm:dimension2}. As explained at the beginning of Section 3, we only need to prove \eqref{eq:L-P1} and its adjoint version. Recall that $S_{E_j}$ is the partial sum projection on $L^p(\T^2;S^p(\Z^2))$ given by 
$
S_{E_j}f(z)=\sum_{n\in E_j} \widehat{f}(n) z^n
$, where $z\in \T^2$.

%By  Lemma \ref{thm:Littwood-P}  and the Schur and Fourier multiplier transference in $\Z^2$ which is defined similar to the one dimensional case, it is sufficient to prove
%\begin{equation}\label{eq:SufficientCondition}
%\left\| \left( \sum_{j = 0}^{\infty} \left| S_{E_{j }} T_{M} f\right|^{2} \right)^{\frac{1}{2}} \right\|_{L^{p}(\T^2;S^p(\Z^2))} \leq  C \frac{p^2}{p-1} \left\| \left( \sum_{j = 0}^{\infty} \left| S_{\delta_{j }}  f  \right|^{2} \right)^{\frac{1}{2}} \right\|_{L^{p}(\T^2;S^p(\Z^2))}.
%\end{equation}
%for $p\geq 2$ and $f\in \Pi(\B(\ell^2(\Z^2))$.
%------------------------------------------------------

Applying the definition of $M_l$ \eqref{eq:muliplierNotation}, we see that  \eqref{eq:left} and \eqref{eq:leftDouble} imply % Need to be simplify
\begin{align} \label{eq:leftPartialMl}
  \left\| \sum_{n=(n_1,\pm 2^{j-1})\in E_j} |\Delta_{n_1} M_{l}  (n)| \right\|_{\infty}  +\left\| \sum_{n=(\pm 2^{j-1},n_2)\in E_j}  |\Delta_{n_2} M_{l} (n )| \right\|_{\infty}  & < C_2,   \\
 \left\|\sum_{n=(n_1,n_2)\in E_j} \left|  \Delta_n M_{l}  (n) \right| \right\|_{\infty}& < C_3.   \label{eq:leftDoublePartial}
\end{align}

To prove \eqref{eq:L-P1}, we will cut $E_j$ into 4 rectangles $E_{j,k}, k=1,\cdots 4$ for $j\geq 1$. Let $I_{j}=[2^{j-1}, 2^j)\cap \Z, J_{j}=[-2^{j-1},2^j)\cap \Z$, set
\begin{equation*}
	E_{j,1}=J_j\times I_j,\qquad E_{j,2}=(-I_j)\times J_j, \qquad E_{j,3}= I_j\times (-J_{j}),\qquad E_{j,4}=(-J_j)\times (-I_{j}).
\end{equation*}

%Here should be a figure...(to be added)
\begin{comment}
\begin{figure}[ht]
    \centering
    \def\svgwidth{0.5\columnwidth}
    \import{./figures/}{E2.pdf_tex}
    \caption{Decomposition of $E_j$}
    \label{fig:A drawing2}
\end{figure}
\end{comment}
Thus, we have
\begin{equation}\label{eq:combin}
	S_{E_j}T_{M} f= \sum_{i=1}^4 S_{E_{j,i}}T_{M}f.
\end{equation}
To prove \eqref{eq:L-P1}, it is sufficient to prove
\begin{equation}\label{eq:sufficient2}
\left\| \left( \sum_{j = 0}^{\infty} \left| S_{E_{j,i }} T_{M} f\right|^{2} \right)^{\frac{1}{2}} \right\|_{L^{p}(\T^2;S^p)} \leq  C^{\prime} \left(\frac{p^2}{p-1}\right)^2 \left\| \left( \sum_{j = 0}^{\infty} \left| S_j  f  \right|^{2} \right)^{\frac{1}{2}} \right\|_{L^{p}(\T^2;S^p)}
\end{equation}
for $p\geq 2, i=1,2,3,4$. The arguments  for $i=1,2,3,4$ are similar. We will give the argument for $i=1$ only. By the fundamental theorem of calculus,
\begin{align}\label{eq:abelianSummationDimension2}
	&S_{E_{j,1}} T_{M} f \\%&=\sum_{n=(n_1,n_2)\in E_{j,1}} M_l (n) A_l (n) z^n \nonumber \\
	= &M_l(-2^{j-1}, 2^{j-1}) S_{E_{j,1}}f + \sum_{n_1\in J_j} \Delta_{n_1} M_l(n_1, 2^{j-1}) S_{(n_1,2^j)\times I_j} f \nonumber \\
 &+\sum_{n_2\in E_j} \Delta_{n_2} M_l (-2^{j-1},n_2) S_{J_j\times (n_2,2^j)} f + \sum_{n=(n_1,n_2)\in E_{j,1}} \Delta_n M_l (n_1,n_2) S_{(n_1,2^j)\times (n_2,2^j)}f \nonumber\\
 &=: P^1_j + P^2_j + P^3_j + P^4_j
\end{align}
By the operator inequality  $|\sum_{k=1}^n a_k|^2\leq n \sum_{k=1}^n |a_k|^2$, we have
\begin{equation}\label{eq:Dimension2Part2}
	|S_{E_{j,1}} T_M f|^2 = |P^1_j + P^2_j + P^3_j + P^4_j|^2 \leq 4(|P^1_j|^2+ |P^2_j|^2+|P^3_j|^2+|P^4_j|^2).
\end{equation}
For part $P^1_j$, by assumption $i)$ of Theorem \ref{thm:dimension2}, we have
\begin{equation}\label{eq:partA}
	|P^1_j|^2= | M_l(-2^{j-1}, 2^{j-1}) ~S_{E_{j,1}}f|^2 \leq C_1^2 |S_{E_{j,1}}f|^2=C_1^2 |S_{E_{j,1}}S_jf|^2.
\end{equation}
By Lemma \ref{Hilbert}, 
\begin{align} 
\left\| \left(\sum_{j\geq 0}  |P^1_j|^2 \right)^{\frac12} \right\|_{L^p(\T^2;S^p(\Z^2))} \leq C  \left( \frac{p^2}{p-1} \right)^2\left\| \left(\sum_{j\geq 0}  |S_jf|^2 \right)^{\frac12} \right\|_{L^p(\T^2;S^p(\Z^2))}.
\end{align}
For part $P^2_j$, we follow  the  arguments similar to \eqref{eq:sumP2} and \eqref{eq:sumP3} in the one dimensional case and write $\Delta_{n_1} M_l(n_1, 2^{j-1})=a_n^*b_n$ with $|a_n|^2=|b_n|^2=|\Delta_{n_1} M_l(n_1, 2^{j-1})|$. Denote by $R_{n_1,j}=(n_1, 2^j)\times I_j$, we have  
\begin{align}\label{eq:partB1}
	|P^2_j|^2 &= \left|\sum_{n_1\in J_j} \Delta_{n_1} M_l(n_1, 2^{j-1}) S_{(n_1,2^j)\times I_j} f \right|^2  \nonumber\\
	& \leq \left\|\sum_{n_1\in J_j} |\Delta_{n_1}M_l(n_1,2^{j-1})| \right\|_{\infty} \left( \sum_{n_1\in J_j} \left|S_{R_{n_1,j}} (b_nS_{j}f) \right|^2 \right).
\end{align}

Thus, by \eqref{eq:partB1}, \eqref{eq:leftPartialMl} and Lemma \ref{Hilbert} and following  the  arguments similar to the case $d=1$, we get  
\begin{align}\label{eq:partB3}
\left\| \left(\sum_{j\geq 0}  |P^2_j|^2 \right)^{\frac12} \right\|_{L^p(\T^2; S^p(\Z^2))}
\leq  C_2 \left(\frac{p^2}{p-1}\right)^2 &\left\|  \left(  \sum_{j\geq 0}  |S_jf|^2  \right)^{\frac12}  \right\|_{L^p(\T^2;S^p)}
\end{align}
Similarly, we have 
\begin{equation} \label{eq:partC}
\left\| \left(\sum_{j\geq 0}  |P_j^3|^2 \right)^{\frac12} \right\|_{L^p(\T^2; S^p)}  \leq    C_2 \left(\frac{p^2}{p-1}\right)^2 \left\|  \left(  \sum_{j\geq 0}  |S_jf|^2  \right)^{\frac12}  \right\|_{L^p(\T^2;S^p)}.
\end{equation}
Now we come to the estimate of part $P^4_j$. Denote $R_{n,j}=(n_1,2^j)\times (n_2,2^j)$. Similarly,
\begin{align}\label{eq:partD}
\left\|  \left( \sum_{j\geq 0}  |P^4_j|^2 \right)^{\frac12} \right\|_{L^p(\T^2;S^p)} & \leq C_3^{\frac12}  \left\|  \left(  \sum_{j\geq 0} \sum_{n=(n_1,n_2)\in E_{j,1}} \left|  S_{R_{n,j}} S_j |\Delta_n M_l(n)|^{\frac12} f    \right|^2   \right)^{\frac12}  \right\|_{L^p(\T^2;S^p)}   \nonumber\\
\leq C_3^{\frac12} \left(\frac{p^2}{p-1} \right)^2  & \left\|  \left(  \sum_{j\geq 0} \sum_{n=(n_1,n_2)\in E_{j,1}} \left| S_j |\Delta_n M_l(n)|^{\frac12} f    \right|^2   \right)^{\frac12}  \right\|_{L^p(\T^2;S^p)} \nonumber \\
\leq C_3 \left(\frac{p^2}{p-1} \right)^2  &\left\| \left(\sum_{j\geq 0} |S_jf|^2  \right)^{\frac12}  \right\|_{L^p(\T^2;S^p)}.
\end{align}

Therefore, by \eqref{eq:combin}, \eqref{eq:abelianSummationDimension2} and \eqref{eq:partB1},\eqref{eq:partB3}, \eqref{eq:partC}, \eqref{eq:partD}, we have
\begin{align}
&\left\| \left( \sum_{j = 0}^{\infty} \left| S_{E_{j,1 }} T_{M} f\right|^{2} \right)^{\frac{1}{2}} \right\|_{L^{p}(\T^2;S^p)}\leq C^{\prime} \left(\frac{p^2}{p-1}\right)^2 \left\| \left( \sum_{j = 0}^{\infty} \left| S_{j } f\right|^{2} \right)^{\frac{1}{2}} \right\|_{L^{p}(\T^2;S^p)}
\end{align}
Thus, \eqref{eq:sufficient2} is proved. Hence, we finish the proof of  theorem \ref{thm:dimension2}.

\subsection{Higher dimensional case}
We need some additional notations to deal with the case $d>2$. Borrowing the notations from \cite{Hytoenen2016},  we denote by $\Z^{\alpha}$ the space,
\[
\Z^{\alpha}:=\{(n_i)_{i:\alpha_i=1}:n_i\in \Z \}
\]
 for  $\alpha\in \{0,1\}^d$. For any $n\in\Z^d$ and $E= I_1\times \cdots \times I_d\subseteq \Z^d$, let
\[
	n_{\alpha}\coloneqq (n_i)_{i:\alpha_i=1}\in \Z^{\alpha}, \quad E_{\alpha}\coloneqq \prod_{i:\alpha_i=1} I_i\subseteq \Z^{\alpha}
\]
be their natural projections onto $\Z^{\alpha}$. In particular, we will use the splittings $n=(n_{\alpha},n_{1-\alpha})\in \Z^{\alpha}\times \Z^{\mathbf{1}-\alpha}$ and $E=E_{\alpha}\times E_{\mathbf{1}-\alpha}$ where $\mathbf{1}=(1,\dots,1)$.  Suppose $s,t\in \Z^d$ and  we  abbreviate the interval notation  $[s,t)\cap \Z^d$ as $[s,t)$.

%-------------
%to be added: decomposition of $\Z^d$

%------------

Similarly, denoted by  $\mathscr{I}^d $ the partition $\mathscr{I}^d \coloneqq \{E_j:j\geq 0\}$ of $\Z^d$, where 

\begin{equation}\label{eq:partitionZd}
	E_j=\begin{cases}
		 \{(0,\cdots,0) \}  & j=0,\\
		 \{(n_1,\cdots,n_d)\in\Z^d:2^{j-1}\leq |(n_1,\cdots,n_d)|_{\infty}<2^j \} & j\geq 1.
	\end{cases}
\end{equation}

Each $E_j$ can be further decomposed into $2^d(2^d-1)$ subsets and each of the subsets can be obtained by translation of the cube $F_j=[2^{j-1},2^j)\times \cdots \times[2^{j-1},2^j)$. 
%Denote $2^{(j)}=(2^{j},\cdots, 2^{j})\in {\Bbb Z}^d, j\geq 1, $ then $F_j$ can be denoted as $[{2}^{(j-1)},{2}^{(j)})$.
%(left corner of $E_j$)
Following similar procedures to those in the two dimensional case and using  the following discrete fundamental theorem formula,  
\begin{align}\label{eq:fundamentalCalculus}
	\chi_{[s,t)} (n) m(n) &= \chi_{[s,t)} \sum_{\alpha\in \{0,1\}^d} \sum_{k_\alpha\in [s,n)_{\alpha}} \Delta^{\alpha} m(s_{\mathbf{1}-\alpha},k_{\alpha}) \nonumber \\
	&= \sum_{\alpha\in \{0,1\}^d} \sum_{k_{\alpha}\in [s,t)_{\alpha}} \chi_{[k,t)_{\alpha}\times [s,t)_{\mathbf{1}-\alpha}}(n) \Delta^{\alpha}m(s_{\mathbf{1}-\alpha},k_{\alpha})
\end{align}
we can obtain the following theorem. The details are left to the interested readers.
\begin{theorem}\label{higerd}
	Given $m=(m_{s,t})_{s,t\in\Z^d}\in \B(\ell^2(\Z^d))$. Suppose $m$ satisfies that, for some $C>0$, 
	\begin{itemize}
	\item[i)] $\sup_{s,t\in\Z^d} |m_{s,t}|<C$,
		\item[ii)] For any $n\in\N$, $s\in \Z^d, \alpha\in \{0,1\}^d$,  $\alpha\neq 0$, and  any $r^{(n)}\in \Z^d$ satisfying $|r^{(n)}_i|=2^{n-1}$ for all $i=1,\cdots, d$,
		\begin{equation}\label{Marcrd>2}
		\sum_{t=(t_{\alpha}, r^{(n)}_{\mathbf{1}-\alpha})\in E_n  } |\Delta_t^{\alpha} m_{s,s+t}|<C,\quad \sum_{t=(t_{\alpha}, r^{(n)}_{\mathbf{1}-\alpha})\in E_n  } |\Delta_t^{\alpha} m_{s+t,s}|<C ,
		\end{equation}
		 	\end{itemize}
	Then $M_m$ extends to a  bounded Schur multiplier on $S^p(\ell^2(\Z^d))$ for $p\in (1,\infty)$ with an upper bound $C_d(\frac{p^2}{p-1})^{d+2}$. Here  $C_d$ is a constant  dependent only on the dimension $d$.
\end{theorem}

% Suppose  
 %$\{X_{i}, i\in I\}$ is a  partition of ${\Bbb Z}^d\times\Z^d$. That is to say $X_{i}$ is a family of disjoint subsets and the union of $X_i$ is the whole space ${\Bbb Z}^d\times\Z^d$.  Let $P_{X_i}$ be the projection from $S^2$ onto the span of $\{e_{k,j}, (k,j)\in X_{i}\}$.  Let $m_X=\sum_i \varepsilon_i \mathbbm{1}_{X_i}$ with $|\varepsilon_{i}|\leq 1$.  We will give two examples of  $X_i$ so that $m_X$ satisfies the assumptions of Theorem \ref{higerd} and the associated Schur multiplier $M_m=\sum_{i} \varepsilon_{i } P_{X_i} $  is bounded on $S^p$ for any $1<p<\infty$.

%\begin{corollary}  Assume there exists a constant $K\in \N$ such that for any given $s\in \Z^d, n\in \N$, the set $$\{(s,t); s-t\in E_n\}\bigcup \{(t,s); s-t\in E_n\}$$ intersects with at most $K$ many $X_{i}$'s. Then $\sum_{i} \varepsilon_{i } P_i $ extends to a   bounded Schur multiplier  .
%\end{corollary}

%\begin{proof}  Note $|\Delta_t^{\alpha} m|\leq | 2^{\alpha|}$. Since the set $\{(s,t); s-t\in E_n\}\bigcup \{(t,s); s-t\in E_n\}$ intersects with at most $K$ many $X_{i}$, there are only 
%\end{proof}

%\begin{example}  
%For $i=(i_1,i_2)\in {\Bbb Z_+}\times \Z_+$, let $X_{i}= \{(k,k); |k|_\infty=|i|_\infty\}\subset {\Bbb Z}^d\times \Z^d$ if $i_1=0$. Let
% $$X_{i}= \{(k,j)\in {\Bbb Z}^d\times\Z^d; 2^{i_1-1}\leq |k-j|_\infty< 2^{i_1}, i_22^{i_1}\leq |k|_\infty< (i_2+1)2^{i_1}\}$$ if $i_1>0$. 
%\end{example} 
Note that we cannot hope for an analogue of Theorem \ref{higerd}  with $E_n$ defined by  the $\ell^2$-metric instead of  the $\ell^\infty$ metric  because the ball-type Fourier multipliers are not uniformly bounded on $L^p(\T^d)$ for any $d>1$.  Doust and Gillespie gave an example of ball-type Schur multipliers for the case $d=1$  in \cite[Theorem 6.2]{DoGi05}. Their argument does not seem to extend to the case $d>1$. 
\begin{example} ({\it Ball Schur Multipliers}) \label{example4.4}
Let $X_{0}= \{(0,0)\}\subset {\Bbb Z}^d\times \Z^d$. For $i\in\N$, let
 $$X_{i}= \{(k,j)\in {\Bbb Z}^d\times\Z^d; 2^{i-1}\leq |(k,j)|_2< 2^i \}.$$   Let $m_X=\sum_i \varepsilon_i \mathbbm{1}_{X_i}$ with $|\varepsilon_{i}|\leq 1$.  Then  $|\Delta_t^{\alpha} m|\leq 2^{|\alpha|}$. Note  that
 $$ |(k,j)|_2\simeq |k|_2+|j|_2\simeq |k-j|_2+|j|_2\simeq |k-j|_\infty+|j|_\infty \simeq |k-j|_\infty+|k|_\infty.$$ We can find a constant $K_d$ which only depends on $d$ such that the set $$\{(s_0,s_0+t); t\in E_n\}\bigcup \{(s_0+t,s_0); t\in E_n\}$$ intersects with at most $K_d$ many $X_{i}'s$ for any fixed $s_0$. Since $\bigcup_{0\leq i\leq n} X_i$ is convex for all $n$, we  conclude that  there are at most $2^dK_d$ many non-zero terms in the two summations in \eqref{Marcrd>2}, and the summations are bounded by $2^dK_d$. So \eqref{Marcrd>2} is satisfied and $M_m=\sum_{i} \varepsilon_{i } P_{X_i} $  is bounded on $S^p$ for any $1<p<\infty$.

\end{example}

\subsection{The case of continuous indices}
We explain in this section that Theorem \ref{thm:main} and Theorem \ref{higerd} extend to the continuous case by approximation. 
Let $S^p(\R^d)$ be the  space of Schatten $p$-class operators acting on the Hilbert space $L^2(\R^d)$.
We identify $ S^2(\R^d)$ as $L^2(\R^{d}\times \R^{d} )$, so for $A\in S^2(\R^d)$ we can talk about its pointwise value $a_{s,t}$. For $m\in L^\infty(\R^{d}\times\R^{d})$, we consider the Schur multiplier type map 
$$M_m(A)=(m(s,t)a_{s,t})_{s,t\in \R^d}.$$
Motivated by the work of \cite{LaSa11} and \cite{CGPT22a}, we wish to find sufficient conditions on $m$ so that $M_m$ extends to a bounded map with respect to the $S^p$-norm for $1<p<\infty$.
 %For more information, see Solomyak, book (citation).

%For continuous generalization of Schur multipliers, the space $\B(\ell^2(\Z))$

% The basis setting-up for Schur multiplier in $R$ and $R^2$.
% the continuous condition for the symbol $M$ and its derivative.

\begin{theorem}
For $p\in (1,\infty)$, consider the Schur multiplier $M_m$ on $S^p(\R)$ with symbol $m (\cdot,\cdot)$ in  $L^{\infty} (\mathbb{R}^{2})$ whose partial derivatives are continuous on $(-2^{j+1},-2^j)\bigcup (2^j, 2^{j+1})$ for all $j~\in~\Z$. Suppose there exists an absolute constant $C$ such that, for all  $j \in \mathbb{Z}$ and $x,y \in \mathbb{R}$, 

\begin{equation} \label{eq: con 1}
    \int_{- 2^{j + 1}}^{- 2^{j}} \left| \partial_{1}  m   (y + t, y)  \right| \ d t + \int_{2^{j}}^{2^{j + 1}} \left|  \partial_{1}  m  (y + t, y) \right| \ d t  \leq C
\end{equation}
and
\begin{equation} \label{eq: con 2}
  \int_{- 2^{j + 1}}^{- 2^{j}} \left| \partial_{2}  m    (x, x + t) \right| \ d t + \int_{2^{j}}^{2^{j + 1}} \left|  \partial_{2} m  (x, x + t) \right| \ d t   \leq C.
\end{equation}
Then, the Schur multiplier $M_{m}$ extends to a   bounded map on $S^p(\R)$ with $\| M_{m} \|\leq C \max \left\{ p^{3}, \frac{1}{(p - 1)^{3}} \right\}$.
\end{theorem}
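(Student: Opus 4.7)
The plan is to deduce Theorem~5.1 from the discrete Theorem~\ref{thm:main} via a lattice-approximation argument in the spirit of Lafforgue--de la Salle~\cite{Lafforgue2011}. For each $N\in\N$, I introduce the sampled symbol $M_N:\Z\times\Z\to\C$ by $M_N(i,j):=M(2^{-N}i,\,2^{-N}j)$, verify that $M_N$ satisfies the hypotheses of Theorem~\ref{thm:main} with constants independent of $N$, apply that theorem to obtain a uniform discrete Schur-multiplier bound, and then pass to the continuous limit.

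Checking the discrete hypotheses is the direct part. Condition~(i) of Theorem~\ref{thm:main} is immediate with $C_1=\|M\|_\infty$. For condition~(ii), fix $k\in\N$ and $i\in\Z$ and set $y:=2^{-N}i$. When $2^{k-1}\le j<2^k$ the subinterval $[2^{-N}j,\,2^{-N}(j+1)]$ lies inside the dyadic interval $[2^{k-1-N},\,2^{k-N}]$ on which $\partial_1 M(y+\cdot,y)$ is continuous by assumption, so the fundamental theorem of calculus and the triangle inequality give
\begin{equation*}
\sum_{2^{k-1}\le j<2^k}\bigl|M_N(i+j+1,i)-M_N(i+j,i)\bigr| \;\le\; \int_{2^{k-1-N}}^{2^{k-N}}|\partial_1 M(y+t,y)|\,dt \;\le\; A,
\end{equation*}
where the last inequality applies~\eqref{eq: con 1} with the continuous dyadic scale $k-1-N$ in place of $j$. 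The analogous estimate for $-2^k<j\le-2^{k-1}$ yields the row part of~\eqref{eq:rowAndColumn} with constant $\le 2A$, uniformly in $N,i,k$; the column part follows from~\eqref{eq: con 2} in the same way. Theorem~\ref{thm:main} then produces $\|S_{M_N}\|_{S_p(\ell^2(\Z))\to S_p(\ell^2(\Z))}\lesssim (p^2/(p-1))^3$, uniformly in $N$, with implicit constant depending only on $\|M\|_\infty$ and $A$.

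The transfer step promotes this uniform discrete bound to a bound on $S_p(\R)$ via the lattice approximation of~\cite{Lafforgue2011}. Partitioning $\R$ into intervals of length $2^{-N}$ identifies $L^2(\R)$ with $L^2([0,2^{-N}))\otimes\ell^2(\Z)$; integral kernels that are piecewise constant on the induced grid form a dense subspace of $S_p(\R)$ (as $1<p<\infty$), and on this subspace $S_M$ acts as $S_{M_N}$ tensored with the identity on $L^2([0,2^{-N}))$. Combining pointwise convergence $M_N\to M$ (guaranteed by continuity of $M$) with this density and the uniform norm bound yields
\begin{equation*}
\|S_M\|_{S_p(\R)\to S_p(\R)}\;\le\;\liminf_{N\to\infty}\|S_{M_N}\|_{S_p(\ell^2(\Z))\to S_p(\ell^2(\Z))}.
\end{equation*}
Since $(p^2/(p-1))^3\asymp\max\{p^3,(p-1)^{-3}\}$, combining with the previous step completes the proof.

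The main technical obstacle lies in this final transfer: one must justify the limit argument rigorously while accommodating possible discontinuities of $\partial_1 M$ at the dyadic endpoints $\pm 2^k$. For each fixed $N$ only finitely many subintervals per dyadic block can straddle such a discontinuity, and their contribution is dominated by $\|M\|_\infty$ and thus harmless; nonetheless, writing the piecewise-constant approximation limit cleanly along the lines of~\cite{Lafforgue2011} is the step where care is required.
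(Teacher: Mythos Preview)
Your approach is essentially the paper's --- discretize the symbol, verify the hypotheses of Theorem~\ref{thm:main} uniformly in the mesh, then pass to the limit via Lafforgue--de~la~Salle --- but with one technical variation worth recording. The paper does not sample $M$ at grid points; instead it forms the conditional expectation $M_k$ of $M$ with respect to the $\sigma$-algebra generated by half-open \emph{parallelograms} $D_{k,a,b}$ (which are squares in the sheared coordinates $(x,y-x)$), sets $m_k(s,t)=\text{Avg}_{D_{k,s,t}}(M)$, and verifies~\eqref{eq:rowAndColumn} by an explicit integral computation that plays the same role as your FTC estimate. The payoff of averaging is that it plugs directly into Lemma~1.11 of~\cite{Lafforgue2011}, which is formulated precisely for conditional expectations along a filtration and yields $\|M\|_{M[S^p]}=\lim_k\|M_k\|_{M[S^p]}$ without further work.

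Your sampling route is arguably cleaner in the verification step, but the transfer paragraph needs one correction. The sentence ``on this subspace $S_M$ acts as $S_{M_N}$ tensored with the identity'' is not right: on a kernel that is piecewise constant on the grid, $S_M$ still multiplies by the \emph{varying} function $M(x,y)$, so the output is not piecewise constant. What you actually want is to introduce the piecewise constant lift $\widetilde M_N(x,y):=M_N(\lfloor 2^N x\rfloor,\lfloor 2^N y\rfloor)$, observe that $\|S_{\widetilde M_N}\|_{S_p(\R)\to S_p(\R)}=\|S_{M_N}\|_{S_p(\ell^2(\Z))\to S_p(\ell^2(\Z))}$ via the tensor identification, and then deduce $\|S_M\|\le\liminf_N\|S_{\widetilde M_N}\|$ from $\widetilde M_N\to M$ almost everywhere (which holds at the continuity points of $M$, hence off a null set by your hypotheses). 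Once stated this way the limit is routine, and your worry about dyadic endpoints disappears since they form a null set.
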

\begin{proof}  Let
 $\mathcal{D}_{k} $ be the $\sigma$-algebra generated by  dyadic cubes $Q_{k, s, t}= \left(\frac {s}{2^k},\frac {s+1}{2^k} \right] \times \left( \frac {t}{2^k},\frac {t+1}{2^k} \right],s,t\in \Z$.  Then  $(\mathcal{D}_{k})_{k = 1}^{\infty}$ is the usual dyadic   filtration for $\mathbb{R}^{2}$.
Given $m\in L^\infty(\mathbb{R}^{2})$, let $m_k=\mathbb{E}_k (m)$   the conditional expectation of $m$ with respect to the $\sigma$-algebra $\mathcal{D}_{k}$. That is to say 
\[
m_{k} (x) = \sum_{Q \in \mathscr{D}_{k}}  \frac{1}{|Q|} \left[ \int_{Q} m(y) dy \right] \chi_{Q} (x), \quad \forall x\in \R^2.
\] 
  Let $L^2(\R, \mathcal{D}_{k})$ be the $L^2$ space of all $\mathcal{D}_k$-measurable functions. 
  Let $\widetilde{m_{k}}(s,t) = m \left( \frac {s}{2^k},\frac t{2^k} \right)$ for ${s, t \in \mathbb{Z}}$. Note that $S^p(L^2(\R, \mathcal{D}_{k}))$ is isometrically isomorphism to $  S^p(\ell^2(\Z))$. We see that $M_{{\tilde m}_k}$ extends to a bounded Schur multiplier on $S^p(\ell^2(\Z))$ with the same norm if 
 $M_{m_k}$ extends to a bounded Schur multiplier on $S^p(L^2(\R, \mathcal{D}_{k}))$ and vice versa.
By Lemma 1.11 of \cite{LaSa11},
\[
\left\| M_m \right\|={ \widebar\lim}_{k \to \infty} \left\| M_{m_k} \right\| ={ \widebar\lim}_{k \to \infty} \left\| M_{\widetilde{m}_k} \right\| .
\]
%By Jensen's inequality for the conditional expectation and the continuity of the partial derivatives of $m$, it is easy to   show that the conditions (\ref{eq: con 1}), (\ref{eq: con 2}) for the symbol $m$ imply the conditions (\ref{Marc}), (\ref{Marr}) for the symbol $\widetilde{m_{k}}$ with constants independent of  $k$. Therefore,  by  Theorem \ref{thm:main}, $\left\| M_m \right\|={ \widebar\lim}_{k \to \infty}\left\| M_{\widetilde{m}_k} \right\| \leq C \max \left\{ p^{3}, \frac{1}{(p - 1)^{3}} \right\}$.

So, we need to show that $M_{\widetilde{m_{k}}}$ satisfies conditions (\ref{Marc}), (\ref{Marr}). First, we verify condition (\ref{Marr}). For each $j \in \mathbb{N}$ and $s \in \mathbb{Z}$,

\begin{align} \label{eq:continue}
&\sum_{\ell = 0}^{2^{j} - 2} \left| m_{k} \left( s, s + 2^{j} + \ell + 1 \right) - m_{k} \left( s, s + 2^{j} + \ell \right) \right|  \nonumber\\
&= 2^{2 k}\sum_{\ell = 0}^{2^{j} - 2} \left| \int_{\frac{s}{2^{k}}}^{\frac{s + 1}{2^{k}}} \int_{\frac{2^{j} + \ell}{2^{k}}}^{{\frac{2^{j} + \ell + 1}{2^{k}}}} M ( y, y + x + \frac{1}{2^{k}} ) \ d x \ d y - \int_{\frac{s}{2^{k}}}^{\frac{s + 1}{2^{k}}} \int_{\frac{2^{j} + \ell}{2^{k}}}^{{\frac{2^{j} + \ell + 1}{2^{k}}}} M \left( y, y + x \right) \ d x \ d y \right|  \nonumber\\
&= 2^{2 k}\sum_{\ell = 0}^{2^{j} - 2} \left| \int_{\frac{s}{2^{k}}}^{\frac{s + 1}{2^{k}}} \int_{\frac{2^{j} + \ell}{2^{k}}}^{{\frac{2^{j} + \ell + 1}{2^{k}}}} \int_{y + x}^{y + x + \frac{1}{2^{k}}}  \partial_{2} M \left( y, t \right) \ d t \ d x \ d y \right|  \nonumber\\
&\leq 2^{2 k}\sum_{\ell = 0}^{2^{j} - 2} \int_{\frac{s}{2^{k}}}^{\frac{s + 1}{2^{k}}} \int_{\frac{2^{j} + \ell}{2^{k}}}^{{\frac{2^{j} + \ell + 1}{2^{k}}}} \int_{y + x}^{y + x + \frac{1}{2^{k}}} \left| \partial_{2} M \left( y, t \right) \right| d t \ d x \ d y  \nonumber\\
&= 2^{2 k} \int_{\frac{s}{2^{k}}}^{\frac{s + 1}{2^{k}}} \int_{\frac{2^{j}}{2^{k}}}^{{\frac{2^{j + 1} - 1}{2^{k}}}} \int_{y + x}^{y + x + \frac{1}{2^{k}}} \left| \left[ \partial_{2} (M) \right] \left( y, t \right) \right| d t \ d x \ d y  \nonumber\\
&= 2^{2 k} \int_{\frac{s}{2^{k}}}^{\frac{s + 1}{2^{k}}} \int_{\frac{2^{j}}{2^{k}}}^{{\frac{2^{j + 1} - 1}{2^{k}}}} \int_{y + \frac{2^{j}}{2^{k}}}^{{y + \frac{2^{j + 1}}{2^{k}}}} \chi_{\left( x, x + \frac{1}{2^{k}} \right)} (t) \left| \left[ \partial_{2} (M) \right] \left( y, t \right) \right| d t \ d x \ d y  \nonumber\\
&= 2^{2 k} \int_{\frac{s}{2^{k}}}^{\frac{s + 1}{2^{k}}} \int_{y + \frac{2^{j}}{2^{k}}}^{{y + \frac{2^{j + 1}}{2^{k}}}} \int_{\frac{2^{j}}{2^{k}}}^{{\frac{2^{j + 1} - 1}{2^{k}}}} \chi_{\left( t - \frac{1}{2^{k}}, t \right)} (x) \ d x \left| \left[ \partial_{2} (M) \right] \left( y, t \right) \right|  d t \ d y  \nonumber\\
&\leq 2^{k} \int_{\frac{s}{2^{k}}}^{\frac{s + 1}{2^{k}}} \int_{y + \frac{2^{j}}{2^{k}}}^{{y + \frac{2^{j + 1}}{2^{k}}}} \left|  \partial_{2} M \left( y, t \right) \right|  d t \ d y 
\leq \sup_{y \in \mathbb{R}} \int_{\frac{2^{j}}{2^{k}}}^{{\frac{2^{j + 1}}{2^{k}}}} \left| \left[ \partial_{2} (M) \right] \left( y, y + t \right) \right|  d t \leq A.
\end{align}

The last inequality follows from the assumption in (\ref{eq: con 2}). So, condition (\ref{Marr}) is verified. Applying the same argument, we utilize the assumption in  (\ref{eq: con 1}) to prove condition (\ref{Marc}). Therefore,  by  Theorem \ref{thm:main}, $\left\| M_m \right\|={ \widebar\lim}_{k \to \infty}\left\| M_{\widetilde{m}_k} \right\| \leq C \max \left\{ p^{3}, \frac{1}{(p - 1)^{3}} \right\}$.
\end{proof}

%\begin{itemize}
%\item expression of $E_{j,k}$
%\item better to  draw a graph of $E_{j,k}$
%\item expression of $S_{E_j}T_M f$
%\item Estimate each parts
%\item conclude and get \eqref{eq:SufficientCondition}
%\end{itemize}
\noindent Similarly,  Theorem \ref{higerd} and Example \ref{example4.4} have analogues  in the continuous case as well.
\begin{theorem}
Denote by $E_{j} := \{ t \in \mathbb{R}^{d} : 2^{j-1} \leq | t |_{\infty} < 2^{j } \}$ for $j\in\Z$.  For $p\in (1,\infty)$, consider the Schur multiplier $m \in L^{\infty} (\mathbb{R}^{2d})$ whose partial derivatives are continuous up to the boundary of $E_k$ for all $k\in\Z$. Assume there exists a constant $C$ such that, 
\begin{align} \label{eq:rightCondition}
&  \int_{(t_{\alpha}, r^{(j)}_{1-\alpha}) \in E_{j}} \left|  \partial^{\alpha} m ( s,s  + t ) \right| dt_\alpha  \leq C\\
	&   \int_{(t_{\alpha}, r^{(j)}_{1-\alpha}) \in E_{j}} \left|  \partial^{\alpha} m ( s+t, s   ) \right| dt_{\alpha}\leq C,
\end{align}
for any  $j\in \Z, s\in  {\Bbb R}^d$ and any $r^{(j)}\in \R^d$ with $|r^{(j)}_i|=2^{j-1}$ for all $1\leq i\leq d$.
Then, the Schur multiplier $M_{m}$ extends to a bounded operator on $S^p(\R^d)$ for all $1<p<\infty$ with $\| M_{m} \|\leq C_d \max \left\{ p^{d+2}, \frac{1}{(p - 1)^{d+2}} \right\}$. Here $t_\alpha$ is defined as in Section 4.2.

\end{theorem}

\begin{remark}The Schur multipliers in all theorems of this article are also   completely bounded  on $S^p$ for $1<p<\infty$, the arguments  are exactly the same.
\end{remark}

\section{ Discussions}
\subsection{Counterexamples}\label{sec:Counterex}
\begin{enumerate}
\item  We show in the following that (\ref{Marc}) alone is not sufficient for the boundedness of $M_m$. 

 Choose a large $K\in \N.$ 
 %and $s\geq 1$ set 
 %$a(s,t)=exp(\frac{i2\pi kj}T).$  
 %and
%$$\partial_t m_{s,t}= \frac{i2\pi\log s}{Nt}exp({\frac{i2\pi\log s\log t}N})$$
Let  $m(s,t)=exp(\frac{i2\pi kj}K)$ if $s=2^k,t=2^j$ for some $j,k\in \N$ satisfying $1\leq j <k  \leq K,$ and $m(s,t)=0$ for other $s,t\in \N$. Let  $\tilde m(k,j)=exp(\frac{i2\pi kj}K)$ if $ 1\leq j <k  \leq K,$ and $\tilde m(k,j)=0$ for other $k,j\in \N$.
Let $U$ be the partial isometry on $\ell_2(\N)$ sending $e_{k}$ to $e_{2^k}$. 
Then, we have $M_{\tilde m}(A) = U^*M_m(UAU^*)U$ for any   $A\in S^p(\ell_2(\N))$ and $\|M_{\tilde m}\|\leq \|M_m\|$. 

Note that,  for any $N, j$ given, there exists at most one $k$ (actually $k=N$)  satisfying $k>j$  and $$2^{N-1}-1\leq|2^k-2^j|<2^N.$$  Using the fact that $|m(s,t)|\leq1$, we get, 
  and for any $N,t$ given
  \begin{eqnarray*}
  \sum_{2^{N-1}\leq |r|< 2^{N}}|m(t+r+1,t)-m(t+r,t)| \leq 2,
  \end{eqnarray*}
because there are at most two non-zero terms in the sum above. 
This means $(m(s,t))_{s,t}$ satisfies the row condition (\ref{Marc}).
On the other hand, if $s=2^N$, then $|m(s,t)-m(s,t+1)|$ does not vanish if $t$ or $t+1$ has the form of $2^j, j=1,\cdots, N-1$ by the definition of $m(s,t)$. Hence  we have
$$
\sum_{2^{N-1}\leq |t-s|<2^N}|m(s,t+1)-m(s,t)|=\sum_{2^{N-1}\leq s-t <2^N} |m(s,t+1)-m(s,t)|=2(N-1),
$$
which shows that $(m(s,t))_{s,t}$ fails the column condition \eqref{Marr}.

 Let $A$ be the  $K$ by $K$ matrix 
$(exp(\frac{-i2\pi kj }K))_{1\leq  k,j\leq K}$. Then $A$  has $S^p$ norm   ${K^{\frac12+\frac 1p}}$.  $M_{\tilde m}(A)$ is the  lower triangle matrix  with all nonzero coefficients being $1$ which has $S^p$ norm 
$\simeq K$ for any given $p, 1<p<\infty$.  
This shows that $K^{\frac12-\frac1p}\lesssim \|M_{\tilde m}\|\leq \|M_m\|$. We then conclude that (\ref{Marc}) alone is not sufficient for the boundedness of $M_m$.
 By the symmetricity, (\ref{Marr}) alone is not sufficient for the boundedness of $M_m$ either.

 \item A smooth version of the example above implies that neither  the assumption (\ref{Mkc}) nor the assumption (\ref{Mkr}) is removable in Corollary \ref{cor3.1}. Indeed, fix a large $K>0$, 
let $m_1(s,t)=\exp(\frac{i2\pi \log_2s\log_2 t }K)$ for $1\leq s\leq t\leq 2^K, s,t\in \N$;  $m_1(s,t)=\frac  {2^{K+1}-t}{ 2^{K}}\exp(i2\pi \log_2s)$ for $1\leq s\leq t, 2^K<t\leq 2^{K+1}, s,t\in \N$ and 
$m_1(s,t)=0$ otherwise.  Then 
$m_1$ satisfies $(\ref{Mkc})$ because 
$$\left| \frac{\partial} {\partial t}\exp \left( \frac{i2\pi \log_2s\log_2 t }K \right) \right|\lesssim \frac 1t\leq \frac 1{t-s}$$ whenever $s<t\leq 2^K$.  Assuming  the sufficiency of (\ref{Mkc}) would imply the uniform  boundedness of $M_{m_1}$ for all $1<p<\infty$, which is wrong because
$M_m(A)=M_{m_1}(VAV)$ for $A\in S^p(\ell^2(\N))$ and $m, V$ defined above. We conclude that  neither  the assumption (\ref{Mkc}) nor the assumption (\ref{Mkr}) is removable.

\item  Let $$F_{N,t}= \{(s,t)\in {\Bbb N}\times\N; 2^{N-1}\leq |s-t|< 2^{N} \}$$ for $N,t\in \N$. Let $Q_{N,t}$ be the projection from $S^2(\ell_2(\N))$ onto the span of $\{e_{s,t}; (s,t)\in F_{N,t}\}$.  One may wonder whether Corollary \ref{cor:main} can be improved so that the Schur multiplier $S_{\varepsilon}=\sum_{N,t\in\N}\varepsilon(N,t)Q_{N,t}$ is bounded for any sequence $|\varepsilon(N,t)|\leq 1$. This is   impossible as well.\footnote{We can   get the same conclusion for sequences $\varepsilon_k=\pm1$ by choosing  Hadamard   orthogonal matrices   instead of the  matrices $(exp(\frac{-i2\pi kj }K))_{1\leq  k,j\leq K}$.} To see this, let $\varepsilon(N,t)=exp(\frac{i2\pi Nj}K)$ if  $t=2^j$ for some $j\in \N$ and $j<N\leq K$. Let $\varepsilon(N,t)=0$ otherwise. Let $V$ be the projection on $\ell^2(\N)$ such that $V(e_i)=e_i$ if $i=2^k$ for some $k\in \N$,  and $V(e_i)=0$ otherwise. Then, for $M_m$ defined in the first example and $A\in S^p(\ell_2(\N))$, we have $$  M_m(A)=S_{\varepsilon}(VAV).$$ 
  Therefore, $K^{\frac12-\frac1p}\lesssim \|S_{\varepsilon}\|$.

  \end{enumerate}
%------------------------------------------------
\subsection{Operator Valued Symbol}\label{sec:OperatorValued}

The Schatten $p$ class has a natural operator space structure  inherited from the operator space complex interpolation $S^p=(S^\infty, S^1)_{\frac1p}, 1<p<\infty$. Pisier proved in \cite[Lemma1.7]{Pi98} that, with respect to $S^p$'s natural operator space structure, a map $M$ on $S^p(\ell^2)$ is completely bounded if and only if $M\otimes id_{ S^p(H)}$ is bounded on $S^p(S^p)=S^p(\ell^2\otimes H)$ for any separable Hilbert space $H$. We will  explain an operator-valued version of Theorem \ref{thm:main} which particularly implies   the complete boundedness of the Schur multipliers considered in Theorem \ref{thm:main}. We will assume the readers are familiar with the terminology of operator spaces in this subsection.

 We will consider $A\in S^2(\ell^2(\Z)\otimes  H)$ with $H$ a separable Hilbert space. We present $A$ in its matrix form $ (a_{i,j})_{i,j\in \Z}$ with $a_{i,j}\in S^2(H)$. More precisely, denote by $e_i$  the canonical basis of $\ell_2$, let $e_{j,i}$ be the rank one operator on $\ell^2$ sending $e_i$ to $e_j$. Denote by $tr$ (resp.  $\tau$)  the canonical trace on $B(\ell^2(\Z))$ (resp. $B(H)$). We set 
$$a_{i,j}=(tr\otimes id) (A (e_{j,i}\otimes id_H)).$$

Let $\M$ be a finite von Neumann algebra with a normal  faithful tracial state  $\tau$. Given a $\M$-valued bounded function $m$ on $\Z\times\Z$ and   $A\in S^2(\ell^2(\Z)\otimes  H)$ in its matrix form $(a_{i,j})_{i,j\in \Z}$. We define $M_m(A)$ as the matrix form 
\begin{eqnarray}
M_m(A)=(m_{i,j}\otimes a_{i,j})_{i,j}.\label{Mmoperator}
\end{eqnarray}
We will show that an analogue of Theorem \ref{thm:main} holds  that  there exists $C_p\simeq (\frac{p^2}{p-1})^3$ for $1<p<\infty$, such that
$$\|M_m(A)\|_{L^p(\M\otimes B(\ell^2(\Z)\otimes H))}\leq C_p\|A\|_ {S^p(\ell^2\otimes H)},$$ 
for all  $A\in S^2\cap S^p$. By the density of $S^2\cap S^p$,  $M_m$ extends to a  bounded operator from  $S^p(\ell^2(\Z) \otimes H)$ to $L^p(\M\otimes B(\ell^2(\Z)\otimes H))$ when $m$ satisfies Marcinkiewicz type conditions. When $\M=\Bbb{C}$, this implies the completely boundedness of $M_m$ in Theorem \ref{thm:main} by Pisier's result.
 We will need  Pisier's $L_\infty(\ell_1)$ norm to express this Marcinkiewicz type condition.

\begin{definition}({\it Pisier's $L^{\infty}(\ell_{1})$ norm}) 
Given $N$-tuples $(x_{1},\dots,x_{N})$ in $ \mathcal{M} $, set 
\begin{equation}\label{eq:factorization2}
\|x\|_{L^{\infty}(\mathcal{M};\ell_{1})}   = \inf\left\{ \left\|\left( \sum a_{j}a_{j}^{*} \right)^{\frac{1}{2}}\right\| \cdot \left\| \left(\sum b_{j}^{*}b_{j}\right)^{\frac{1}{2}}\right\| \right\},
\end{equation}
where the infimum runs over all possible factorizations $x_{j}=a_{j}b_{j}$ with $a_{j}, b_{j}\in  \mathcal{M}$.
\end{definition}
When $x_k\geq 0$,  $\|x\|_{L^{\infty}(\mathcal{M};\ell_{1})}=\|\sum_k |x_k|\|$ but the two quantities are not comparable in general. Pisier showed that $\|x\|_{L^{\infty}(\mathcal{M};\ell_{1})}<\infty$ if and only if there is a decomposition    $x_k=x_{k,1}-x_{k,2}+ix_{k,3}-ix_{k,4}$ such that $x_{k,\ell} \geq0$ and $\|(x_{k,\ell})_k\|_{L^{\infty}(\mathcal{M};\ell_{1})}<\infty$ for all $\ell=1,2,3,4$. 
%The mapping $(x_{1},\dots,x_{N})\mapsto (x_{1},\dots,x_{N+1},0)$ is an isometric embedding of $L_{p}(\ell_{1}^N)$ to $L_{p}(\ell_{1}^{N+1})$.  %We denote by $L_{p}(\ell_{1})$ the inductive limit of the spaces $L_{p}(\ell_{1}^{N})$ when $N\to \infty$; i.e., $L_{p}(\ell_{1})$ is the completion of the space of all finite sequences in $L_{p}(\mathcal{M})$ equipped with the norm \eqref{eq:factorization2}.

Given $M_m$ defined as in \eqref{Mmoperator},  let $$\Delta_s m(s,t)=m(s+1,t)-m(s,t ),\ \  \Delta _t m(s,t)=m(s,t+1)-m(s,t),$$ for $s,t\in \Z$. 
 \begin{theorem} \label{thm:operator}
\vskip.1cm
 $M_m$ defined as in \eqref{Mmoperator} extends to a   bounded map from  Schatten $p$-classes $S^p(\ell^2\otimes H)$ to ${L^p(\M\otimes B(\ell^2(\Z)\otimes H))}$ for all $1<p<\infty$ with bounds $\lesssim (\frac{p^2}{p-1})^3$, if  $m$ is bounded  in $\M$ and there is a constant $C$ such that, 
 (i)  for any $n\in{\Bbb N}, t\in \Z$, 
 \begin{eqnarray}\label{deltam}
  \|(\Delta_s m(s+t,t))_{ 2^{n-1}\leq |s|<2^{n}}\|_{L^{\infty}(\mathcal{M};\ell_{1})}<C, 
\end{eqnarray}
(ii) for any $n\in{\Bbb N}, s\in \Z$,  \begin{eqnarray}\label{deltam2}
   \|(\Delta_t m(s,s+t))_{ 2^{n-1}\leq |t|<2^{n}}\|_{L^{\infty}(\mathcal{M};\ell_{1})}<C.
\end{eqnarray}  
 \end{theorem}
 
 \noindent {\it Sketch of proof.}  Denote by $\tilde m(s,t)=m(s,t)\otimes 1_{\ell_2  \otimes H}$ and $\tilde a_{s,t}=1_\M\otimes a_{s,t}$. Then $\tilde m(s,t)$ commutes with $a_{s',t'}$ for any $s,t,s',t'\in \Z$. Let 
 \begin{align}\label{eq:muliplierNotationO}
&\tilde M_{l}(j)=\sum_{s\in\Z}\tilde m(s,s-j)\otimes e_{s,s},\quad \tilde M_r(j)=\sum_{s\in \Z } \tilde m(s+j,s)\otimes e_{s,s},\\
&\tilde A(j)=\sum_{s\in\Z } \tilde a_{s,s-j}\otimes e_{s,s-j}=\sum_{s\in\Z } \tilde a_{s+j,s} \otimes e_{s+j,s}, \nonumber
\end{align} 
with $e_{s,t}$ the canonical basis of $S^2(\ell_2(\Z))$.
Let $f(z)
	= \sum_{j\in\Z }  \tilde A(j) z^j$ and 
\begin{align} 
	T_{\tilde M}f(z)
	= \sum_{j\in\Z }\tilde M_l(j)  \tilde A(j) z^j.
\end{align}
We still have  that
\begin{align} 
	T_{\tilde M}f(z)
	=\sum_{j\in\Z } \tilde A(j) \tilde M_r(j) z^j,
\end{align}
and the identities
\begin{eqnarray*}
\|f\|_{L^p(\M\otimes B(\ell^2(\Z)\otimes H))}=\|A\|_{S^p(\ell_2\otimes H)} \\
|T_{\tilde M}f\|_{L^p(\M\otimes B(\ell^2(\Z)\otimes H))}=\|M_m(A)\|_{S^p(\ell_2\otimes H)}.
\end{eqnarray*}
Moreover, the conditions  \eqref{deltam} and \eqref{deltam2} implies that \begin{eqnarray*}
 \|\Delta_l\tilde M(j)_{2^{n-1}<|j|\leq 2^n}\|_{  L^\infty(\M\otimes B(\ell^2(\Z)),\ell_1)}, \|\Delta_r\tilde M(j)_{2^{n-1}<|j|\leq 2^n}\|_{  L^\infty(M\otimes B(\ell^2(\Z)), \ell_1)}<C
\end{eqnarray*}
 for  $$\Delta_l \tilde M(j)=\tilde M_l(j+1)-\tilde M_l(j), \Delta_r \tilde M(j)=\tilde M_r(j+1)-\tilde M_r(j).$$  After these, it is not hard to check that the arguments for the proof of Theorem \ref{thm:main} work as  well for the tensor case.

\begin{corollary} The Schur multipliers  considered in Theorem \ref{thm:main} are completely bounded on the Schatten classes $S^p, 1<p<\infty$ with bounds $\lesssim (\frac{p^2}{p-1})^3$ with respect to their natural operator space structure.
\end{corollary}

\begin{remark} The optimal constants for the $L^p$ bounds of the classical Marcinkiewicz Fourier multipliers is $p^\frac32$ as $p\rightarrow \infty$ (\cite{TaWr01}). It is unclear at all what is the optimal  asymptotic order for the $S^p$-bounds of the Schur multipliers in Theorem \ref{thm:main}. 
\end{remark}

\noindent{ \bf Open Question.}  
  Assume    $m$ is a  bounded map on ${\Bbb Z}\times{\Bbb Z}$  such that
 \begin{eqnarray*}
 \sum_s |m(k,j_s)-m(k,j_{s+1})|^2&<&C
 \end{eqnarray*}
 for all possible increasing sequences $j_s\in {\Bbb Z}$. Does $M_m$ extend to a bounded map on  $S^p$ for all $1<p<\infty$?
 
\begin{remark} The authors heard this question from  Potapov and Sukochev. They told the authors that it stems from  the work of  Birman and Solomyak on double operator integrals. 
The third author  noticed Theorem \ref{thm:main} during his effort of attacking this question.
%\cite{MeXu07}   claimed a positive answer to the question above. But the argument  contains a serious mistake.  

%Theorem \ref{thm:main} was noticed in the third author's effort of seeking an answer to it.
\end{remark}

\noindent {\bf Acknowledgments.} The authors are grateful to the referee for a very careful reading of the article. The second and third authors are partially supported by NSF grants  DMS 2247123.  

\medskip\medskip

\noindent {\bf Statements and Declarations}

\noindent On behalf of all authors, the corresponding author states that there is no conflict of interest.
\medskip

\noindent Data sharing not applicable to this article as no datasets were generated or analysed during the current study.

\begin{comment}
%---------------------

\begin{theorem}
	continuous case dimension 2, to be added...
\end{theorem}

\begin{theorem}
higher dimension n, to be added...
\end{theorem}

\end{comment}
%\nocite{*} %The command \nocite{*} causes all items in the database to be included in the references, regardless of whether or not they are cited in the paper.
%\bibliography{Marcinkiewicz}
\bibliographystyle{abbrv}

\bigskip

%\newpage
{\it \noindent Chian Yeong Chuah,\\
  Department of Mathematics, \\
The Ohio State University,\\
231 West 18th Avenue,\\
Columbus, OH 43210-1174, USA.\\
 ORCid:0000-0003-3776-6555\\
Email:{chuah.21@osu.edu}\\
https://math.osu.edu/people/chuah.21	
 
\medskip

\noindent Zhen-Chuan Liu,\\
Departamento de Matem\'aticas,\\
 Universidad Aut\'onoma de Madrid, \\
 C/ Francisco Tom\'as y Valiente 
\\
28049 Madrid, Spain\\
 ORCid: 0000-0002-6092-5473\\
Email:{liu.zhenchuan@uam.es}
	
 \medskip
 
\noindent Tao Mei, \\Department of Mathematics\\
Baylor University\\
1301 S University Parks Dr, Waco, TX 76798, USA.\\
 ORCid: 0000-0001-6191-6184\\
Email:{tao\_mei@baylor.edu}}\\
https://math.artsandsciences.baylor.edu/person/tao-mei-phd

\end{document}